	\providecommand{\abbrv}[1]{#1.\@\xspace}
	\providecommand{\ie}{\abbrv{i.e}}
  \newcommand{\adef}{\abbrv{Def}}
  \newcommand{\aprop}{\abbrv{Prop}}
\def\rec{\text{\mxedb r}}
\newcommand{\georgian}[1]{\text{\mxedb #1}}
\newcommand{\omissis}{[\dots\unkern]}
\def\ho{\text{Ho}}
\newcommand{\xto}[1]{\xrightarrow{#1}}
\newcommand{\xot}[1]{\xleftarrow{#1}}
\renewcommand{\phi}{\varphi}
\newcommand{\tee}{\mathfrak{t}}
\providecommand{\tenuis}{\scalebox{.7}[.6]{\ooalign{\hss\textbar\hss\cr\hss$\equiv$\hss}}}
\providecommand{\glue}{\mathbin{\scalebox{.8}[.6]{\ooalign{\hss{\raisebox{4.45pt}{$\cup$}}\hss\cr\hss\tenuis\hss}}}}
\providecommand{\smalltenuis}{\scalebox{.7}[.6]{\ooalign{\hss\textbar\hss\cr\hss$\equiv$\hss}}}
\providecommand{\smallglue}{\mathbin{\scalebox{.8}[.6]{\ooalign{\hss{\raisebox{4pt}{$\cup$}}\hss\cr\hss\smalltenuis\hss}}}}
\providecommand{\var}[2]{ \left[ \begin{smallmatrix} %
      #1 \\ \downarrow \\ #2 %
      \end{smallmatrix} \right]}
\newcommand{\refbf}[1]{\textbf{\ref{#1}}}
  \def\@cite#1#2{[\textbf{#1}\if@tempswa , #2\fi]}
  \def\@biblabel#1{[\textsf{#1}]}
\DeclareMathOperator{\id}{id}       
\DeclareMathOperator{\fib}{fib}
\newcommand{\cate}[1]{\text{\fontseries{b}\selectfont{\upshape #1}}}
\renewcommand{\textbf}[1]{\text{\fontseries{b}\selectfont{\upshape #1}}}
\newcommand{\D}{\cate{D}}
\newcommand{\fF}{\mathbb{F}}
\newcommand{\mcal}[1]{\mathcal{#1}}
\newcommand{\CC}{\cate{C}}
\newcommand{\EE}{\mathcal{E}}
\newcommand{\MM}{\mathcal{M}}
\newcommand{\K}{\cate{K}}
\newcommand{\ts}{\textsc{ts}}
\long\def\symbolfootnote[#1]#2{\begingroup%
\def\thefootnote{\fnsymbol{footnote}}\footnote[#1]{#2}\endgroup}
\newcommand{\lrlarrows}{\mathbin{\substack{\leftarrow\\[-.9em] \rightarrow \\[-.9em] \leftarrow}}}
\def\dgrm#1{\mathsf{#1}}
\newcommand{\iddots}{\protect{\rotatebox[origin=c]{90}{$\ddots$}}}
\renewcommand{\setminus}{\smallsetminus}
\def\uno{\text{\ding{172}} }
\def\due{\text{\ding{173}} }
\renewcommand*{\thefootnote}{(\textbf{\arabic{footnote}})}
\newtheoremstyle{reference}%
   {}                %
   {}                %
   {}              
   {}                      
   {\bfseries\fontseries{b}\upshape}              
   {:}                     
   {.2em}                  
   {\thmname{#1}           
    \thmnumber{#2}         
    \thmnote{\bfseries\fontseries{b}\upshape [#3]}}  
\theoremstyle{reference}
  \newtheorem{theorem}{Theorem}[section]
  \newtheorem{lemma}[theorem]{Lemma}
  \newtheorem{proposition}[theorem]{Proposition}
  \newtheorem{example}[theorem]{Example}
  \newtheorem{remark}[theorem]{Remark}
  \newtheorem{definition}[theorem]{Definition}
  \newtheorem{notat}[theorem]{Notation}
  \newtheorem{scholium}[theorem]{Scholium}
  \newtheorem*{theorem*}{Theorem}
  \newtheorem*{lemma*}{Lemma}
  \newtheorem*{proposition*}{Proposition}
  \newtheorem*{example*}{Example}
  \newtheorem*{exercise*}{Exercise}
  \newtheorem*{remark*}{Remark}
  \newtheorem*{definition*}{Definition}
  \newtheorem*{corollary*}{Corollary}
  \newtheorem*{notat*}{Notation}
  \newtheorem*{scholium*}{Scholium}
\providecommand{\glue}{\mathbin{\mathpalette\dopawglue\relax}}
\providecommand{\cupdot}{\mathbin{\mathpalette\docupdot\relax}}
\newcommand{\docupdot}[2]{%
  \ooalign{$#1\cup$\cr\hfil$#1\cdot$\hfil}}
\definecolor{semilightgray}{rgb}{0.65, 0.65, 0.65}
\def\gray#1{\textcolor{semilightgray}{#1}}
\def\DefaultEpigraphWidth{.6\textwidth}
\begin{document}
\shorthandoff{"}

\title{Recollements in stable $\infty$-categories}

\author{Domenico Fiorenza, Fosco Loregi\`an}
\address{$\bullet$ \textsf{Domenico Fiorenza}: Dipartimento di Matematica ``Guido Castelnuovo'', 
Universit\`a degli Studi di Roma ``la Sapienza'',	
P.le Aldo Moro 2 -- \oldstylenums{00185} -- Roma.} 
\email{fiorenza@mat.uniroma1.it}

\address{$\bullet$ \textsf{Fosco Loregian}: SISSA - 
Scuola Internazionale Superiore di Studi Avanzati, 
via Bonomea 265, \oldstylenums{34136} Trieste.} 
\email{floregi@sissa.it}
\begin{abstract}
We develop the theory of \emph{recollements} in a stable $\infty$-ca\-te\-go\-ri\-cal setting. In the axiomatization of Be\u\i linson, Bernstein and Deligne, recollement situations provide a generalization of Grothen\-dieck's ``six functors'' between derived categories. The adjointness relations between functors in a recollement $\D^0\lrlarrows \D \lrlarrows \D^1$ induce a ``recoll\'ee'' $t$-structure $\tee_0\smallglue\tee_1$ on $\D$ , given $t$-structures $\tee_0,\tee_1$ on $\D^0, \D^1$. Such a classical result, well-known in the setting of triangulated categories, {\color{black}is recasted in the setting of stable $\infty$-categories and the properties of the associated ($\infty$-categorical) factorization systems are investigated.}
%
In the geometric case of a stratified space, various recollements arise, which ``interact well'' with the combinatorics of the intersections of strata to give a well-defined, associative $\smallglue$ operation. From this we deduce a generalized associative property for $n$-fold gluing $\tee_0\smallglue\cdots\smallglue \tee_n$, valid in any stable $\infty$-category.
\end{abstract}
\subjclass{18E30, 18E35, 18A40.}
\keywords{
  algebraic geometry%
; recollements%
; algebraic topology%
; stable infinity-categories%
; normal torsion theories%
}
\maketitle
\tableofcontents
\section{Introduction.}\index{Perverse sheaf}
Recollements in triangulated categories were introduced by A\@. Be\u{\i}linson,  J\@. Bernstein and P\@. Deligne in \cite{BBDPervers}, searching an axiomatization of the Grothendieck's ``six functors'' formalism for derived categories of sheaves on (the strata of a) stratified topological space. \cite{BBDPervers} will be our main source of inspiration, and reference for classical results and computations; among other recent but standard references, we mention \cite{KS1,Banagl}. Later, ``recollement data'' were noticed to appear quite naturally in the context of intersection homology \cite{pflaum2001analytic,goresky1980intersection,goresky1983intersection} and Representation Theory \cite{parshall1988derived,kiehl2001weil}. In more recent years Beligiannis and Reiten \cite{Beligiannisreiten}, adapting to the triangulated setting an old idea of Jans \cite{jans1965}, linked recollement data to so-called \textsc{ttf}-triples \index{ttf triple@\smallcap{ttf}-triple}(\ie triples $(\mcal{X}, \mcal{Y}, \mcal{Z})$ such that both $(\mcal{X}, \mcal{Y})$ and $(\mcal{Y}, \mcal{Z})$ are $t$\hyp{}structures): recollement data, in the form of \textsc{ttf}-triples, appear quite naturally studying derived categories of representations of algebras, see \cite[\abbrv{Ch} \textbf{4}]{Beligiannisreiten}.

{\color{black}Here we
translate the basic theory of recollements in the stable $\infty$-categorical setting and investigate their properties. 
In particular, inspired by the analysis of geometric recollements data associated with a stratified space, we consider the problem of associativity for iterated recollements, and show how 
one has associativity as soon as the relevant Beck-Chevalley condition is satisfied. Remarkably, in the geometric situation, this condition is always satisfied so that, as one should maybe  expect, geometric iterated recollements do not depend on the order on which recollement data are used to produce the global $t$-structure on the derived category of the stratified space $X$. Although probably implicit in the construction, this remark appears not be spelled out explicitly in \cite{BBDPervers}.}
%

\section{Classical Recollements.}\label{classical}\index{Recollement}
\newfont{\scaledfont}{yswab scaled 1000}
\setlength{\epigraphwidth}{.5\textwidth}
\epigraph{
	{\scaledfont Sitzt ihr nur immer! leimt zusammen},\\
	{\scaledfont Braut ein Ragout von andrer Schmaus},\\
	{\scaledfont und blas't die k\"ummerlichen Flammen}\\ 
	{\scaledfont aus eurem Aschenh\"aufchen `raus}!}{\textsc{Faust}, I \oldstylenums{538}-\oldstylenums{541}.}
\setlength{\epigraphwidth}{\DefaultEpigraphWidth}
The aim of this subsection is to present the basic features of ``classical'' recollements in the setting of stable $\infty$\hyp{}categories ignoring, for the moment, the translation in terms of normal torsion theories which will follow.
\begin{definition}\label{def:recol}
A (\emph{donnée de}) \emph{recollement} consists of the following arrangement of stable $\infty$\hyp{}categories and functors between them:
\begin{equation}\label{rec}
\xymatrix{
  \D^0	& \D	& \D^1
  \ar|i "1,1";"1,2" 
  \ar@<8pt>^{i_L} "1,2";"1,1" 
  \ar@<-8pt>_{i_R} "1,2";"1,1" 
  \ar|q "1,2";"1,3" 
  \ar@<8pt>^{q_L} "1,3";"1,2" 
  \ar@<-8pt>_{q_R} "1,3";"1,2" 
}
\end{equation}
satisfying the following axioms:
\begin{enumerate}
\item There are adjunctions $i_L\dashv i\dashv i_R$ and $q_L\dashv q\dashv q_R$;
\item The counit $\epsilon_{(i_L\dashv i)} \colon i_L i \to 1$ and the unit $\eta_{(i\dashv i_R)}\colon 1\to i_r i$ are natural isomorphisms; also, the unit $1\to qq_R$ and counit $qq_L \to 1$ are natural isomorphisms;\footnote{With a little abuse of notation we will write $i_L i = \id_{\D^0} = i_R i$, and similarly for $qq_L = \id_{\D} = qq_R$.}
\item The (essential) image of $i$ equals the \emph{essential kernel} of $q$, namely the full subcategory of $\D$ such that $qX\cong 0$ in $\D^1$;
\item 
The natural homotopy commutative diagrams 
\begin{equation}
\xymatrix{
  q_L q & \id_\D  & ii_R  & \id_\D \\
  0 & ii_L  & 0 & q_R q
  \ar "1,1";"1,2" ^{\epsilon_{(q_L\dashv q)}}
  \ar "1,1";"2,1" 
  \ar "1,2";"2,2" ^{\eta_{(i_L\dashv i)}}
  \ar "1,3";"1,4" ^{\epsilon_{(i\dashv i_R)}}
  \ar "1,3";"2,3" 
  \ar "1,4";"2,4" ^{\eta_{(q\dashv q_R)}}
  \ar "2,1";"2,2" 
  \ar "2,3";"2,4" 
}
\end{equation}
induced by axioms~(\oldstylenums{1}), (\oldstylenums{2}) and (\oldstylenums{3})
are pullouts\footnote{Here and everywhere else the category of functors to a stable $\infty$\hyp{}category becomes a stable $\infty$\hyp{}category in the obvious way (see \cite[\abbrv{Prop} \textbf{1.1.3.1}]{LurieHA}).}.
\end{enumerate}
\end{definition}
\begin{remark}
As an immediate consequence of the axioms, a recollement gives rise to various reflections and coreflections of $\D$: since by axiom (\oldstylenums{2}) the functors $i, q_L, q_R$ are all fully faithful, $q_Rq, ii_L$ are reflections and $q_Lq, ii_R$ are coreflections. Moreover, axioms (\oldstylenums{3}) and (\oldstylenums{4}) entail that the compositions $i_R q_R, qi, i_L q_L$ are all ``exactly'' zero, \ie not only the kernel of $q$ is the essential image of $i$, but also the kernel of $i_{L/R}$ is the essential image of $q_{L/R}$.
\end{remark}
\begin{remark}
Axioms (\oldstylenums{2}) and (\oldstylenums{4}) together imply that there exists a canonical natural transformation $i_R \to i_L$, obtained as $i_R(\eta_{(i_L \dashv i)})$ (or equivalently, as $i_L(\epsilon_{(i\dashv i_R)})$: it's easy to see that these two arrows coincide). Axiom (\oldstylenums{4}) entails that there is a fiber sequence of natural transformations
\begin{equation}
\xymatrix{
   i_Rq_Lq \ar[r]\ar[d]& i_R \ar[r]\ar[d] &  0\ar[d] \\
   0 \ar[r] &  i_L \ar[r] &  i_Lq_Rq 
}
\notag\end{equation}
\end{remark}
\begin{notat}
We will generally use a compact form like 
\begin{equation}
(i,q)\colon \D^0 \lrlarrows  \D \lrlarrows  \D^1
\end{equation}
to denote a recollement (\refbf{rec}), especially in inline formulas. Variations on this are possible, either to avoid ambiguities or to avoid becoming stodgy. 

We will for example say that ``$(i,q)$ is a recollement on $\D$'' or that ``$\D$ is the \emph{d\'ecollement} of $\D^0, \D^1$'' to denote that there exists a diagram like $(\refbf{rec})$ having $\D$ as a central object. In other situations we adopt an extremely compact notation, referring to a (donn\'e de) recollement with the symbol $\rec$ of (the letter \emph{rae} of the Georgian alphabet, in the {\georgian{mxedruli}} script, see \cite{hewitt1995georgian}). 
\end{notat}
\paragraph{A geometric example.}\index{Stratified space}\index{Recollement!geometric ---} The most natural example of a recollement comes from the theory of \emph{stratified spaces} \cite{Wein,Banagl}:
\begin{example}\label{geomrecoll}
Let $X$ be a topological space, $F \subseteq X$ a closed subspace, and $U=X\setminus F$ its open complement. 

From the two inclusions $j\colon F \hookrightarrow X$, and $i\colon U\hookrightarrow X$ we obtain the adjunctions $j^*\dashv j_*\dashv j^!$, $i_!\dashv i^*\dashv i_*$ between the categories $\mathbf{Coh}(U), \mathbf{Coh}(X)$ and $\mathbf{Coh}(F)$ of coherent sheaves on the strata. Passing to their (bounded below-)derived versions we obtain functors\footnote{For a topological space $A$ we denote $\D(A)$ the \emph{derived $\infty$\hyp{}category} of coherent sheaves on $A$ defined in \cite[\S\textbf{1.3.2}]{LurieHA}; we also invariably denote as $j^*\dashv j_*\dashv j^!$, $i_!\dashv i^*\dashv i_*$ the functors between stable $\infty$\hyp{}categories induced by the homonym functors between abelian categories.}
\begin{equation}
\xymatrix{
  \D(F)	& \D(X)	& \D(U)
  \ar "1,1";"1,2" ^{j_*}
  \ar "1,2";"1,3" ^{i^*}
}
\end{equation}
giving rise to reflections and coreflections
\begin{equation}
\xymatrix{
  \D(F)	& \D(X)	& \D(U)	& \D(F)	& \D(X)	& \D(U).
  \ar@{}|\top "1,1";"1,2" 
  \ar@<4pt>@{^{(}->} "1,1";"1,2" ^{j_*}
  \ar@<4pt> "1,2";"1,1" ^{j^*}
  \ar@{}|\top "1,2";"1,3" 
  \ar@<4pt> "1,2";"1,3" ^{i^*}
  \ar@<4pt>@{^{(}->} "1,3";"1,2" ^{i_!}
  \ar@{}|\perp "1,4";"1,5" 
  \ar@<4pt>@{^{(}->} "1,4";"1,5" ^{j_*}
  \ar@<4pt> "1,5";"1,4" ^{j^!}
  \ar@<4pt> "1,5";"1,6" ^{i^*}
  \ar@{}|\perp "1,5";"1,6" 
  \ar@<4pt>@{^{(}->} "1,6";"1,5" ^{i_*}
}
\end{equation}
These functors are easily seen to satisfy axioms (\oldstylenums{1})-(\oldstylenums{4}) above: see \cite[\textbf{1.4.3.1-5}]{BBDPervers} and \cite[\textbf{7.2.1}]{Banagl} for details.
\end{example}
\begin{remark}
The above example, first discussed in \cite{BBDPervers}, is in some sense para\-digmatic, and it can be seen as a motivation for the abstract definition of recollement: a generalization of Grothendieck's ``six functors'' formalism. Several sources \cite{han2014recollements,bazzoni2013recollements,hugel2011recollements,chen2014recollements} convey the intuition that a recollement $\rec$ is some sort of ``exact sequence'' of triangulated categories, thinking $\D$ as decomposed into two parts, an ``open'' and a ``closed'' one. This also motivates the intuition that a donn\'ee de recollement is not symmetric.
\end{remark}
\paragraph{An algebraic example.}
The algebraic counterpart of the above example involves derived categories of algebras: we borrow the following discussion from \cite{han2014recollements}.
\begin{example}
Let $A$ be an algebra, and $e\in A$ be an idempotent element; let $J = eAe$ be the ideal generated by $e$, and suppose that
\begin{itemize}
\item $Ae\otimes_J eA\cong J$ under the map $(xe, ey)\mapsto xey$;
\item $\text{Tor}_n^J(Ae, eA)\cong 0$ for every $n > 0$.
\end{itemize}
Then there exists a recollement
\begin{equation}
\xymatrix@C=2cm{
	\D(A/J) \ar[r]^{i=- \otimes_{A/J} A/J} & \D(A)\ar[r]^{q = -\otimes_A Ae} & \D(eAe)
}
\end{equation}
\end{example}
between the derived categories of modules on the rings $A/J, A, eAe$.

Interestingly enough, also this example is paradigmatic in some sense; more precisely, \emph{every} recollement $\rec\colon \D(A_1)\lrlarrows \D(A) \lrlarrows \D(A_2)$ is equivalent, in a suitable sense, to a ``standard'' recollement where $i_L$ and $q_L$ act by tensoring with distinguished objects $Y\in \D(A), Y_2\in \D(A_2)$.
\begin{definition}[Standard recollement]
Let {\georgian{s}} $\colon \D(A_1)\lrlarrows \D(A) \lrlarrows \D(A_2)$ be a recollement between algebras; it is called a \emph{standard} recollement generated by a pair $(Y,Y_2)$ if $i_L \cong -\otimes_A Y$, and $q_L \cong -\otimes_{A_2}Y_2$.
\end{definition}
\begin{proposition}
Let $\rec \colon \D(A_1)\lrlarrows \D(A) \lrlarrows \D(A_2)$ be a recollement between algebras; then $\rec$ is equivalent (in the sense of Remark \refbf{equiv.of.reco}) to a standard recollement $\text{\georgian{s}}$ generated by the pair $(Y,Y_2)$.
\end{proposition}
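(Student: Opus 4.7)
The plan is to read the two generating objects directly off the recollement by evaluating the left adjoints on the free rank-one modules, and then to invoke a derived Eilenberg--Watts representability result to recognize $i_L$ and $q_L$ as tensor products with bimodules. Concretely, I would set $Y:=i_L(A)\in\D(A_1)$ and $Y_2:=q_L(A_2)\in\D(A_2)$: since $A$ carries its own right $A$-action and $i_L$ is an exact functor of stable $\infty$\hyp{}categories, $Y$ inherits a right $A$-action making it an $(A_1,A)$\hyp{}bimodule; analogously $Y_2$ acquires an $(A_2,A)$\hyp{}bimodule structure. These bimodule structures are exactly what is needed in order to form the two tensor products $-\otimes_A Y$ and $-\otimes_{A_2} Y_2$.

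The second step is to observe that, being left adjoints between presentable stable $\infty$\hyp{}categories, $i_L$ and $q_L$ preserve all small colimits. The $\infty$\hyp{}categorical derived Eilenberg--Watts theorem (in the spirit of \cite[\athm \textbf{7.1.2.1}]{LurieHA}) then identifies any such functor with tensoring by its value on the generator, yielding natural equivalences
\[
i_L(-)\simeq (-)\otimes_A Y,\qquad q_L(-)\simeq (-)\otimes_{A_2} Y_2,
\]
which exhibit the data of a standard recollement $\text{\georgian{s}}$ generated by $(Y,Y_2)$ on the same triple $\D(A_1),\D(A),\D(A_2)$.

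Finally, the pointwise identification of the left adjoints has to be promoted to a genuine equivalence of recollements in the sense of Remark \refbf{equiv.of.reco}. Here I would exploit the rigidity of adjunctions: a recollement is determined, up to contractible choice, by either of its left-adjoint rails; once $i_L$ and $q_L$ are replaced on the nose by the tensor-product functors, their right adjoints $i,q$ are transported uniquely, and the iterated adjoints $i_R,q_R$ follow. The axioms (\oldstylenums{1})--(\oldstylenums{4}) survive any such transport because they are invariant under equivalence of functor triples. The main obstacle is honestly the invocation of the derived Eilenberg--Watts theorem: one has to verify that its hypotheses are met in the stable $\infty$\hyp{}categorical setup (presentability of the source, colimit-preservation of the functor, compact generation by the free rank-one module); once this is in place, the remaining arguments are formal bookkeeping.
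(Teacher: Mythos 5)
Your route is genuinely different from the one the paper takes. The paper does not prove the proposition directly: it states a lemma (from Han) characterizing recollements between derived categories of algebras via a pair of exceptional/compact objects $X_1, X_2 \in \D(A)$ satisfying an orthogonality and generation condition, and then defers to \cite[\S\textbf{2}]{han2014recollements} for the rest. That route goes through a tilting-style classification of which triples of algebras admit a recollement. You bypass it entirely: you read the bimodules off the left-adjoint rails by evaluating on the free rank-one generators and invoke the $\infty$-categorical Eilenberg--Watts (Schwede--Shipley) representability theorem to identify the colimit-preserving functors $i_L, q_L$ with derived tensor products. This is a cleaner and more idiomatic argument in the stable $\infty$-categorical setting, and it buys you the bimodule structure on the nose rather than indirectly through compact generators; what it loses is Han's finer structural information (which objects of $\D(A)$ govern the recollement), which is sometimes wanted in applications.

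Two small but real slips. First, the target of $q_L$ is $\D(A)$, not $\D(A_2)$, so $Y_2 := q_L(A_2)$ lives in $\D(A)$; as written you place it in the wrong category. Second, the sidedness of the bimodule structure is reversed: for $i_L(-) \simeq (-)\otimes_A Y$ with source $\D(A)$ of right $A$-modules and target $\D(A_1)$ of right $A_1$-modules, $Y$ must carry a \emph{left} $A$-action (induced functorially from the left-multiplication action $A \simeq \mathrm{End}_{\D(A)}(A)$) and a right $A_1$-action, i.e.\ $Y$ is an $(A,A_1)$-bimodule, not an $(A_1,A)$-bimodule; analogously $Y_2 = q_L(A_2)$ is an $(A_2,A)$-bimodule. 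Neither slip affects the strategy, but both should be fixed before the Eilenberg--Watts citation (the precise reference in Higher Algebra is in \S\textbf{4.8}, not \textbf{7.1.2.1}) can be applied cleanly. The concluding ``rigidity of adjunctions'' paragraph is correct in spirit but could be tightened: once $i_L$ and $q_L$ are identified up to natural isomorphism with tensor functors, the original recollement is already standard in the sense of the definition, so the equivalence demanded by Remark \refbf{equiv.of.reco} is just the identity triple together with those natural isomorphisms; no transport of the right adjoints is actually needed.
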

The proof relies on the following
\begin{lemma}
Let $A_1, A, A_2$ be algebras. The derived categories on these algebras are part of a recollement $\colon \D(A_1)\lrlarrows \D(A) \lrlarrows \D(A_2)$ if and only if there exist two objects $X_1, X_2 \in\D(A)$ such that
\begin{itemize}
\item $\hom(X_i, X_i)\cong A_i$ for $i=1,2$;
\item $X_2$ is an exceptional and compact object, and $X_1$ is exceptional and self-compact;
\item $X_1 \in \{ X_2\}^\perp$;
\item $\{X_1\}^\perp\cap\{X_2\}^\perp = (0)$.
\end{itemize}
\end{lemma}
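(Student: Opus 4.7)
The plan is to prove the two implications separately, using the standard correspondence between tilting-like objects in $\D(A)$ and recollement data.

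For the \emph{only if} direction, starting from a recollement $(i,q)\colon \D(A_1)\lrlarrows \D(A)\lrlarrows \D(A_2)$, I would define $X_1 \defequal i(A_1)$ and $X_2 \defequal q_L(A_2)$, where $A_1$ and $A_2$ are regarded as the free rank-one modules over themselves. The endomorphism computation reduces to adjunction and the fact that $i$ and $q_L$ are fully faithful: $\hom_{\D(A)}(X_1, X_1)\cong \hom_{\D(A_1)}(A_1, i_L i A_1) = \hom_{\D(A_1)}(A_1,A_1) \cong A_1$, and similarly for $X_2$ via $q q_L \cong \id$. The same computation (in higher degrees $[n]$) shows the exceptionality, since $\hom_{\D(A_i)}(A_i,A_i[n])=0$ for $n\ne 0$. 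Compactness of $X_2$ follows because $A_2$ is compact in $\D(A_2)$ and $q_L$, being a left adjoint to a colimit-preserving functor, preserves compacts; self-compactness of $X_1$ is verified by restricting to the image of $i$ (which is equivalent to $\D(A_1)$). The orthogonality $X_1\in\{X_2\}^\perp$ follows from $\hom(X_2,X_1) \cong \hom(A_2, q i A_1) = 0$ by axiom~(\oldstylenums{3}). Finally, the vanishing $\{X_1\}^\perp\cap\{X_2\}^\perp=(0)$ is a consequence of the fiber sequence $q_L q \to \id_\D \to i i_L$ (axiom~(\oldstylenums{4})): an object $Y$ orthogonal to both $X_1$ and $X_2$ has both $i_L Y$ and $q Y$ null (since these represent the relevant hom-functors), hence is null itself.

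For the \emph{if} direction, given $X_1, X_2$ satisfying the hypotheses, I would define the candidate functors as
\[
i \defequal -\otimes^L_{A_1} X_1\colon \D(A_1)\to \D(A), \qquad q_L \defequal -\otimes^L_{A_2} X_2\colon \D(A_2)\to \D(A),
\]
together with their right adjoints $i_R \defequal \mathbf{R}\hom_A(X_1,-)$ and $q \defequal \mathbf{R}\hom_A(X_2,-)$ (these exist thanks to compactness/self-compactness). The left adjoint $i_L$ to $i$ is obtained from the fact that $X_1$ is self-compact and exceptional, while the right adjoint $q_R$ to $q$ exists by Brown representability applied to $\D(A_2)$. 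Axiom~(\oldstylenums{2}) (full faithfulness of $i,q_L,q_R$) is then a rephrasing of the endomorphism computation $\hom(X_i,X_i)\cong A_i$ and of exceptionality; axiom~(\oldstylenums{3}) is exactly $X_1\in\{X_2\}^\perp$; the final fiber sequence $q_L q\to \id_\D \to i i_L$ required by axiom~(\oldstylenums{4}) is constructed pointwise: for $Y\in\D(A)$, take the cofiber of the counit $q_L q Y \to Y$ and use $\{X_1\}^\perp\cap\{X_2\}^\perp=(0)$ to show that this cofiber lies in the essential image of $i$.

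The main obstacle I expect is the last step, namely showing that these pieces assemble into a bona fide recollement in the $\infty$-categorical sense, where pullout squares have to be witnessed coherently rather than just up to isomorphism. In particular, verifying that the cofiber of $q_Lq\to \id$ is canonically of the form $ii_L$ requires more than a triangulated computation: one has to use the fact that, in a stable $\infty$-category, a full stable subcategory together with its orthogonal determines a reflection, and that the orthogonality hypothesis $\{X_1\}^\perp\cap\{X_2\}^\perp=(0)$ forces the relevant localization sequence to split into the desired fiber-cofiber configuration. Once this is in place, the remaining verifications (the second square of axiom~(\oldstylenums{4}), and the essential-image/essential-kernel identification) follow formally by dualizing.
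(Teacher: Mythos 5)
The paper itself offers no proof of this Lemma: it cites Han \cite[\S\textbf{2}]{han2014recollements} and moves on, so you are on your own here, and your sketch does follow the standard route one finds in the tilting-theoretic literature (set $X_1=i(A_1)$, $X_2=q_L(A_2)$ and run the adjunctions; conversely use derived tensor/hom along $X_1,X_2$). Two remarks. First, there is a small but real slip in your ``only if'' argument for $\{X_1\}^\perp\cap\{X_2\}^\perp=(0)$: since $X_1=i(A_1)$, the hom-functor $\hom(X_1,-)\cong\hom(A_1,i_R-)$ is represented through the \emph{right} adjoint $i_R$ (via $i\dashv i_R$), not through $i_L$; the condition $Y\in\{X_1\}^\perp$ therefore forces $i_RY=0$, not $i_LY=0$, and accordingly the fiber sequence one should invoke is $ii_R\to\id_\D\to q_Rq$ (the other half of axiom~(\oldstylenums{4})), or simply the joint conservativity of $\{q,i_R\}$ from Lemma \refbf{reflectsall}. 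The conclusion $Y\cong 0$ is unchanged, but you should fix the adjoint. Second, your worry in the ``if'' direction about assembling a genuine $\infty$-categorical recollement rather than a mere triangulated one is exactly the substantive content; the mechanism you name (a full stable subcategory plus its orthogonal determines a reflective localization, and the vanishing hypothesis forces the cofiber of $q_Lq\to\id$ into the image of $i$) is the correct one, though to make it airtight you would want to phrase it via the normal-torsion-theory dictionary of the surrounding paper rather than pointwise triangle chases, since pointwise cofibers do not automatically cohere into a fiber sequence of endofunctors. As a sketch the proposal is sound; the one actual error is the $i_L$-for-$i_R$ mixup above.
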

See \cite[\S\textbf{2}]{han2014recollements} for details.
\paragraph{A homotopical example.}
Let $\ho({}_\mcal{G}\cate{Sp})$ be the \emph{global stable homotopy category} of \cite{SchwedeGlobal}; this is defined as the localization of the category of globally equivariant orthogonal spectra at the homotopical class of \emph{global equivalences} (\cite[\adef \textbf{1.2}]{SchwedeGlobal}: the homotopical category ${}_\mcal{G}\cate{Sp}$ admits a natural forgetful functor $u\colon {}_\mcal{G}\cate{Sp} \to \cate{Sp}$ which ``forgets the equivariancy'' (it is the identity on objects, and includes the class of global equivalences in the bigger class of weak equivalences of plain spectra), which has both a left and a right adjoint $u_L, u_R$, and plays the r\^ole of a $q$-functor in a recollement
\begin{equation}
\xymatrix{
	\cate{Sp}_+ \ar[r]  & \prescript{}{\mcal{G}}{\cate{Sp}} \ar[r]_u & \cate{Sp}
}
\end{equation}
where the functor $i\colon \cate{Sp}_+ \to {}_\mcal{G}\cate{Sp}$ embeds the subcategory of orthogonal spectra that are \emph{stably contractible} in the traditional, non-equivariant sense.
\begin{remark}
Since in a stable $\infty$\hyp{}category every pullback is a pushout and vice versa, 
any functor between stable $\infty$\hyp{}categories preserving either limits or colimits preserves in particular pullout diagrams. Since left adjoints and right adjoints have this property, we find
\end{remark}
\begin{proposition}[Exactness of recollement functors]\label{recoexact}\index{Recollement!exactness of ---'s functors}
Each of the functors $i,i_L,i_R,q,q_L,q_R$ in a recollement situation preserves pullout diagrams.
\end{proposition}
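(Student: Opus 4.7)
The plan is to make the remark immediately preceding the statement into a one-line proof. The key observation is that ``pullout'' in a stable $\infty$-category means ``pullback'' and ``pushout'' interchangeably (since the two notions coincide there), so preservation of pullouts by a functor $F\colon \D\to \EE$ between stable $\infty$-categories is logically weaker than either the preservation of finite limits or the preservation of finite colimits.

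Hence I would just walk through the six functors in the diagram~\eqref{rec} and for each of them record whether it is a left adjoint, a right adjoint, or both. Axiom~(1) of Definition~\ref{def:recol} gives the adjunctions $i_L\dashv i\dashv i_R$ and $q_L\dashv q\dashv q_R$, so $i_L$ and $q_L$ are left adjoints, $i_R$ and $q_R$ are right adjoints, while $i$ and $q$ are simultaneously left and right adjoints. Left adjoints preserve colimits (hence pushouts) and right adjoints preserve limits (hence pullbacks); in both cases we conclude that pullout diagrams are preserved.

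There is really no obstacle: everything follows from \cite[\S\textbf{1.1.1}]{LurieHA}, where it is shown that in a stable $\infty$-category a square is a pullback iff it is a pushout, together with the elementary fact that adjoints preserve the limits/colimits appropriate to their handedness. The only thing one might want to spell out, for completeness, is the symbol ``pullout'' itself, \emph{i.e.}\ reiterate that by a pullout square one means either (equivalently) a pullback or a pushout square in the stable $\infty$-category at hand; after that, the proposition is immediate.
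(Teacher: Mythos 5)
Your proposal is correct and is essentially the paper's own argument: the remark immediately preceding the proposition in the paper already records that in a stable $\infty$-category pullbacks and pushouts coincide, so any functor preserving either limits or colimits preserves pullouts, and all six recollement functors are left and/or right adjoints by axiom~(1). Your walk-through of the six functors simply spells out what the paper leaves implicit.
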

This simple remark will be extremely useful in view of the ``standard procedure'' for proving results in recollement theory outlined in \refbf{generalproced}.
\begin{definition}[The ($\infty$-)category $\cate{Recol}$]
A morphism between two recollements $\rec$ and $\rec'$ consists of a triple of functors $(F_0, F, F_1)$ such that the following square commutes in every part (choosing from time to time homonymous left or right adjoints):
\begin{equation}
\xymatrix{
  \D^0 \ar[r]|i \ar[d]_{F_0} & \D\ar[d]_F \ar[r]|q \ar@<-6pt>[l]\ar@<6pt>[l]& \D^1 \ar[d]^{F_1} \ar@<-6pt>[l]\ar@<6pt>[l]\\
  '{\D^0} \ar[r]|{i'} & '{\D} \ar[r]|{q'} \ar@<-6pt>[l]\ar@<6pt>[l]& '{\D^1}\ar@<-6pt>[l]\ar@<6pt>[l] \\
}
\end{equation}
This definition turns the collection of all recollement data into a $\infty$\hyp{}category denoted $\cate{Recol}$ and called the ($\infty$-)category of recollements.
\end{definition}
\begin{remark}\label{equiv.of.reco}
The natural definition of equivalence between two recollement data (all three functors $(F_0, F_{01}, F_1)$ are equivalences) has an alternative reformulation (see \cite[\abbrv{Thm} \textbf{2.5}]{parshall1988derived}) asking that only two out of three functors are equivalences; nevertheless (\emph{loc. cit.}) this must not be interpreted as a full 3-for-2 condition.

{\color{black}Equivalently,} 
 we can define this notion (see \cite[\S \textbf{1.7}]{hugel2011recollements}), asking that the essential images of the fully faithful functors $(i, q_L,q_R)$ are pairwise equivalent with those of $(i', q_L', q_R')$.
\end{remark}
\index{ttf triple@\smallcap{ttf}-triple}
We now concentrate on other equivalent ways to specify a recollement on a stable $\infty$\hyp{}category, slightly rephrasing Definition \refbf{def:recol}: first of all, \cite[\abbrv{Prop} \textbf{4.13.1}]{Hol} shows that the localization functor $q_Rq$, which is an exact localization with reflective kernel, uniquely determines the recollement datum up to equivalence; albeit of great significance as a general result, we are not interested in this perspective, and we address the interested readers to \cite{Hol} for a thorough discussion. 

Another equivalent description of a recollement, nearer to our ``torsio-centric'' approach, is via a pair of $t$\hyp{}structures on $\D$ \cite{ttftriples}:
\begin{definition}[Stable {\sc ttf} Triple]
Let $\D$ be a stable $\infty$\hyp{}category. A \emph{stable \textsc{ttf} triple} (short for \emph{torsion-torsionfree triple}) on $\D$ is a triple of full subcategories $(\mcal{X}, \mcal{Y}, \mcal{Z})$ of $\D$ such that both $(\mcal{X}, \mcal{Y})$ and $(\mcal{Y}, \mcal{Z})$ are $t$\hyp{}structures on $\D$.
\end{definition}
Notice in particular that $\D$ is reflected on $\mcal{Y}$ via a functor $R^{\mcal{Y}}$ and coreflected via a functor $S^{\mcal{Y}}$
. The whole arrangement of categories and functors is summarized in the following diagram
\begin{equation}\label{ttf}
\xymatrix@C=1.5cm@R=2mm{
 	& 	& \mcal{X} \\
 \mcal{Y}	& \cate{D}	&  \\
 	& 	& \mcal{Z}
 \ar@/_8pt/@<-3pt> "1,3";"2,2" _{i_\mcal{X}}
 \ar "2,1";"2,2" |{i_\mcal{Y}}
 \ar@<8pt> "2,2";"2,1" ^{R_\mcal{Y}}
 \ar@<-8pt> "2,2";"2,1" _{S_\mcal{Y}}
 \ar@/^8pt/@<-3pt> "2,2";"1,3" _{S_\mcal{X}}
 \ar@/_8pt/@<3pt> "2,2";"3,3" ^{R_\mcal{Z}}
 \ar@/^8pt/@<3pt> "3,3";"2,2" ^{i_\mcal{Z}}
}
\end{equation}
where $S_{\mcal{Y}}\dashv i_\mcal{Y}\dashv R_\mcal{Y}$, $i_\mcal{Z}\dashv R_\mcal{Z}$ and $S_\mcal{X}\dashv i_\mcal{X}$. 

Stable \textsc{ttf} triples are in bijection with equivalence classes of recollements, as it is recalled in \cite[\abbrv{Prop} \textbf{4.2.4}]{ttftriples}; the same bijection holds in the stable setting, \emph{mutatis mutandis}.

We conclude this introductory section with the following Lemma, which will be of capital importance all along \S\refbf{stabrecoll}: functors in a recollement jointly reflect isomorphisms.
\begin{lemma}[Joint conservativity of recollement data]\label{reflectsall}\index{Recollement!conservativity of ---'s functors}
Let $\D$ be a stable $\infty$\hyp{}category, and let $$(i,q)\colon \D^0 \lrlarrows  \D  \lrlarrows  \D^1$$
be a recollement on $\D$. Then the following conditions are equivalent for an arrow $f\in \hom(\D)$:
\begin{itemize}
\item $f$ is an isomorphism in $\D$;
\item $q(f)$ is an isomorphism in $\D^1$ and $i_R(f)$ is an isomorphism in $\D^0$;
\item $q(f)$ is an isomorphism in $\D^1$ and $i_L(f)$ is an isomorphism in $\D^0$.
\end{itemize}
In other words, the pairs of functors $\{q, i_R\}$ and $\{q, i_L\}$ \emph{jointly reflect isomorphisms}.
\end{lemma}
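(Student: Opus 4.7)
The forward implication is immediate: any functor sends isomorphisms to isomorphisms, so if $f$ is invertible in $\D$ then $q(f), i_L(f), i_R(f)$ are all invertible in their respective targets. The substance of the lemma is therefore the converse, and my plan is to deduce it from axiom~(\oldstylenums{4}) in Definition~\ref{def:recol}, which provides, for every $X\in\D$, the canonical fiber sequences
\begin{equation*}
ii_R X \longrightarrow X \longrightarrow q_Rq X,
\qquad
q_Lq X \longrightarrow X \longrightarrow ii_L X
\end{equation*}
(the pullouts of the second row of (\oldstylenums{4}) in the stable $\infty$-category $\D$). These are natural in $X$, so any arrow $f\colon X \to Y$ in $\D$ fits into a commutative ladder whose rows are fiber sequences and whose outer verticals are $ii_R(f), q_Rq(f)$ (resp.\ $q_Lq(f), ii_L(f)$).

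Suppose now that $q(f)$ and $i_R(f)$ are equivalences. Then $q_Rq(f)$ is an equivalence because the fully faithful functor $q_R$ preserves equivalences, and similarly $ii_R(f)$ is an equivalence because $i$ does. In a stable $\infty$-category, a morphism of fiber sequences in which two of the three vertical maps are equivalences forces the third one to be an equivalence as well: this is the two-out-of-three property for cofiber sequences, a direct consequence of the long exact sequence in homotopy, or more intrinsically of the fact that in the stable setting the middle term of a fiber sequence is determined up to canonical equivalence by the outer terms and the connecting map. Applied to our ladder, this yields that $f$ itself is an equivalence, proving the second bullet implies the first. The argument for the pair $\{q, i_L\}$ is identical, using the other fiber sequence $q_Lq \to \id_\D \to ii_L$ in place of $ii_R \to \id_\D \to q_Rq$.

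No step is really hard: the only point that deserves care is invoking the appropriate two-out-of-three statement for fiber sequences in a stable $\infty$-category (rather than its more familiar triangulated shadow). This is standard and can be cited from \cite[\S\textbf{1.1}]{LurieHA}, so the proof reduces to the bookkeeping of naturality squares sketched above.
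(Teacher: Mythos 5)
Your proof is correct, but it takes a different route from the one in the paper. You go directly via axiom~(\oldstylenums{4}): the natural fiber sequences $ii_R \to \id_\D \to q_Rq$ and $q_Lq \to \id_\D \to ii_L$, together with the two-out-of-three property for morphisms of fiber sequences in a stable $\infty$-category. The paper instead first establishes a sub-lemma: the pair $\{q, i_L\}$ (resp.\ $\{q, i_R\}$) \emph{jointly reflects zero objects}. This is deduced from axiom~(\oldstylenums{3}) --- if $qD\cong 0$ then $D$ lies in the essential image of $i$, say $D\cong iD'$, and then $0\cong i_L D \cong i_L i D' \cong D'$, hence $D\cong 0$. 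With that in hand, the paper uses the equivalence ``$f$ is an equivalence iff $\fib(f)\cong 0$'' and the exactness of the recollement functors (Prop.~\ref{recoexact}, so $q\,\fib(f)\cong\fib(qf)$ and $i_L\,\fib(f)\cong\fib(i_Lf)$) to reduce the claim to the zero-reflection sub-lemma. The two arguments are about equally short; yours leans on the two-out-of-three lemma for fiber sequences and axiom~(\oldstylenums{4}), while the paper's leans on axiom~(\oldstylenums{3}), exactness, and a reusable zero-reflection observation. A minor point in your write-up: you mention that $q_R$ and $i$ are fully faithful when arguing that $q_Rq(f)$ and $ii_R(f)$ are equivalences, but full faithfulness is not needed there --- any functor preserves equivalences.
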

\begin{proof}
We only prove that if $q(f)$ and $i_L(f)$ are isomorphisms in the respective co\-do\-mains, then $f$ is an isomorphism in $\D$. We need a preparatory sub-lemma, namely that the pair $\{q, i_L\}$ reflects zero objects; the only non trivial part of this statement is that if $qD\cong 0$ in $\D^1$ and $i_LD\cong 0$ in $\D^0$, then $D\cong 0$ in $\D$, an obvious statement in view of axiom (\oldstylenums{3}) of \abbrv{Def} \refbf{def:recol}, since $qD\cong 0$ entails $D\cong i(D')$, and now $0\cong i_L(D)=i_LiD' \cong D$.

With this preliminary result, we recall that $f\colon X\to Y$ is an isomorphism if and only if $\text{fib}(f)\cong 0$, and apply the previous result, together with the fact that recollement functors preserve pullouts.

Replacing $i_L$ with $i_R$, the proof shows a similar statement about the joint reflectivity of $\{q, i_R\}$.
\end{proof}
\begin{notat}\label{veryshort}
We will often use a rather intuitive shorthand, writing $\{q, i_L\}(f)$, or $\{q, i_R\}(f)$ to both functors applied to the same arrow. For example:
\begin{itemize}
	\item Given (the left classes of) a pair of $t$\hyp{}structures $\D_{\ge 0}^0, \D_{\ge 0}^1$ we write ``$\{q, i_L\}(D)\in \D_{\ge 0}$'' (see Thm. \refbf{gluing}) to denote that the object $qD\in \D_{\ge 0}^1$ and $i_L(D)\in \D_{\ge 0}^0$; similarly for $\{q, i_R\}(D)\in \D_{< 0}$ and other combinations.
	\item Given (the left classes of) a pair of normal torsion theories $\EE_0, \EE_1$, we write ``$\{q, i_{L/R}\}(f)\in \EE$'' (see Thm. \refbf{thm:trueglued}) to denote that the arrow $f\in\hom(\D)$ is such that $qf\in \EE_1$ and $i_{L/R}(f)\in \EE_0$; similarly for $\{q, i_{L/R}\}(g)\in \MM$ and other combinations.
\end{itemize}
\end{notat}
\begin{remark}
The joint reflectivity of the recollement functors $\{q, i_L\}$ or $\{q, i_R\}$ can be seen as an analogue, in the setting of an abstract recollement, of the fact that in the geometric case of the recollement induced by a stratification $\varnothing\subset U\subset X$ one has (\cite[\textbf{2.3}]{parshall1988derived}) that a morphism of sheaves $\varphi\colon \mathscr{F}\to \mathscr{F}'$ on $X$ is uniquely determined by its restrictions $\varphi\bigr\vert_{U}$ and $\varphi\bigr\vert_{X\setminus U}$.
\end{remark}
\index{Gluing!--- of $t$\hyp{}structures}
\subsection{The classical gluing of $t$-structures.}\label{classrec}\index{t-structure@$t$\hyp{}structure!gluing of ---s|see {Gluing}}\index{.glue@$\glue$}
The main result in the classical theory of recollements is the so-called \emph{gluing theorem}, which tells us how to obtain a $t$-structure $\tee = \tee_0 \glue \tee_1$\footnote{The symbol $\smallglue$ (pron. \emph{glue}) reminds the alchemical token describing the process of \emph{amalgamation} between two or more elements (one of which is often mercury): albeit amalgamation is not recognized as a proper stage of the \emph{Magnum Opus}, several sources testify that it belongs to the alchemical tradition (see \cite[pp. \textbf{409-498}]{roth1976deutsches}).} on $\D$ starting from two $t$-structures $\tee_i$ on the categories $\D^i$ of a recollement $\rec$.
\begin{theorem}[Gluing Theorem]\label{gluing}
Consider a recollement $$\rec = (i,q)\colon \D^0 \lrlarrows  \D \lrlarrows  \D^1,$$ and let $\tee_i$ be $t$-structures on $\D^i$ for $i=0,1$; then there exists a $t$-structure on $\D$, called the \emph{gluing} of the $\tee_i$ (along the recollement $\rec$, but this specification is almost always omitted) and denoted $\tee_0\glue \tee_1$, whose classes $\big( (\D^0\glue \D^1)_{\ge 0}, (\D^0\glue \D^1)_{<0}\big)$ are given by
\begin{align}
(\D^0\glue \D^1)_{\ge 0} &= \Big\{ X\in\D\mid (q X\in \D_{\ge 0}^1)\land (i_L X\in \D_{\ge 0}^0) \Big\};\notag \\
(\D^0\glue \D^1)_{<0} &= \Big\{ X\in\D\mid ( q X\in \D_{< 0}^1)\land (i_R X\in \D_{< 0}^0) \Big\}.\label{glued}
\end{align}
\end{theorem}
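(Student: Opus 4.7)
The plan is to verify the three defining axioms of a $t$-structure on the stable $\infty$-category $\D$: (i) closure of $(\D^0\glue\D^1)_{\ge 0}$ under $[1]$ and of $(\D^0\glue\D^1)_{<0}$ under $[-1]$; (ii) orthogonality $\hom_\D(X,Y)\cong *$ for $X\in(\D^0\glue\D^1)_{\ge 0}$ and $Y\in(\D^0\glue\D^1)_{<0}$; and (iii) for every $X\in\D$, the existence of a fiber sequence $X_{\ge 0}\to X\to X_{<0}$ with $X_{\ge 0}\in(\D^0\glue\D^1)_{\ge 0}$ and $X_{<0}\in(\D^0\glue\D^1)_{<0}$. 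Part (i) is immediate from \aprop\refbf{recoexact}: the functors $q,i_L,i_R$ are exact and hence commute with suspension, so the membership conditions transport along shifts. For (ii), I would apply $\hom_\D(X,-)$ to the fiber sequence $ii_RY\to Y\to q_RqY$ of axiom~(\oldstylenums{4}): via the adjunctions $q\dashv q_R$ and $i_L\dashv i$, the two end terms identify respectively with $\hom_{\D^1}(qX,qY)$ and $\hom_{\D^0}(i_LX,i_RY)$, both contractible by the $\tee_i$-orthogonalities.

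The main obstacle is (iii). My strategy is to construct $X_{<0}$ via two successive pushouts and then set $X_{\ge 0}:=\fib(X\to X_{<0})$. The first pushout is formed from the counit $\epsilon\colon q_LqX\to X$ of $q_L\dashv q$ and the image under $q_L$ of the $\tee_1$-truncation $qX\to\tau_{<0}^1qX$, producing an intermediate object $Y_1$. Applying the exact functors $q,i_L$ and using $qq_L\cong\id$ and $i_Lq_L\cong 0$ (the latter from the remark after \adef\refbf{def:recol}) gives $qY_1\cong\tau_{<0}^1qX$ and $i_LY_1\cong i_LX$. Next I push $Y_1$ out along the counit $ii_RY_1\to Y_1$ of $i\dashv i_R$ and the image under $i$ of the $\tee_0$-truncation $i_RY_1\to\tau_{<0}^0i_RY_1$, obtaining $X_{<0}$. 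Using $qi\cong 0$ and $i_Ri\cong\id$ one finds $qX_{<0}\cong\tau_{<0}^1qX\in\D^1_{<0}$ and $i_RX_{<0}\cong\tau_{<0}^0i_RY_1\in\D^0_{<0}$, so $X_{<0}\in(\D^0\glue\D^1)_{<0}$.

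The delicate step is to check $i_LX_{\ge 0}\in\D^0_{\ge 0}$; the companion condition $qX_{\ge 0}\cong\tau_{\ge 0}^1qX\in\D^1_{\ge 0}$ falls out of applying $q$ to the defining fiber sequence. Here I would exploit the stable feature that every pushout square is also a pullback. Applying $i_L$ to the second pushout, and using $i_Li\cong\id$, yields the bicartesian square
\begin{equation}
\xymatrix@R=1.5em{
i_RY_1 \ar[r]\ar[d]_{\phi} & \tau_{<0}^0 i_RY_1 \ar[d] \\
i_LX \ar[r] & i_LX_{<0}
}\notag
\end{equation}
in $\D^0$, where $\phi$ is the canonical natural transformation $i_R\to i_L$ noted after \adef\refbf{def:recol}. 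Since the square is also a pullback, the fibers of its two horizontal maps agree, so $i_LX_{\ge 0}\cong\fib(i_LX\to i_LX_{<0})\cong\fib(i_RY_1\to\tau_{<0}^0 i_RY_1)\cong\tau_{\ge 0}^0 i_RY_1$, which lies in $\D^0_{\ge 0}$ by construction. This completes the verification of (iii).
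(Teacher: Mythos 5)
Your proof is correct, and it is essentially the paper's argument run in the dual direction. The paper constructs the aisle truncation $SX$ by a two--step \emph{fiber} construction: first the mate $\hat\eta\colon X\to q_R R_1 qX$ of the $\tee_1$-truncation under $q\dashv q_R$ gives $WX=\fib(\hat\eta)$, and then the mate of the $\tee_0$-truncation of $i_L WX$ under $i_L\dashv i$ gives $SX$; $RX$ is then obtained as a pushout. You instead build the coaisle truncation $X_{<0}$ by the dual two--step \emph{pushout} construction (pushing along the counits of $q_L\dashv q$ and $i\dashv i_R$, using the exact functors $q_L$ and $i$), and recover $X_{\ge 0}$ as a fiber. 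These two halves are precisely the ``star'' and ``twostar'' diagrams the paper later shows to agree in the Jacob's ladder (Proposition \refbf{thejacbo}): your $Y_1$ is the paper's $KX=\cofib(q_LS_1qX\to X)$ and your $X_{<0}$ is $R'X$. The delicate membership check ($i_LX_{\ge 0}\in\D^0_{\ge 0}$) via the bicartesian square and the vanishing $i_Lq_L\cong 0$ is the same argument the paper gets by applying $i_L$ to its big pullout diagram; the orthogonality step via the axiom-(\oldstylenums{4}) fiber sequence $ii_RY\to Y\to q_RqY$ and the two adjunctions is identical to the paper's (you have the two identifications transposed in the word ``respectively,'' but the content is right: $\hom_\D(X,ii_RY)\cong\hom_{\D^0}(i_LX,i_RY)$ and $\hom_\D(X,q_RqY)\cong\hom_{\D^1}(qX,qY)$). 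The shift-closure point (i), which you spell out, is left implicit in the paper.
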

\begin{remark}
Following Notation \refbf{veryshort} we have that $X\in \D_{\ge 0}$ iff $\{q, i_L\}(X)\in \D_{\ge 0}$  and $Y\in \D_{<0}$ iff $\{q, i_R\}(X)\in \D_{<0}$, which is a rather evocative statement: the left/right class of $\tee_0\glue \tee_1$ is determined by the left/right adjoint to $i$.
\end{remark}

\begin{remark}
The ``wrong way'' classes
\begin{gather}
(\D^0\glue \D^1)^\bigstar_{\ge 0} = \Big\{ X\in\D\mid (\{q,i_R\} X\in \D_{\ge 0} \Big\};\notag \\
(\D^0\glue \D^1)^\bigstar_{<0} = \Big\{ X\in\D\mid (\{q,i_L\} X\in \D_{< 0}  \Big\}.
\end{gather}
do not define a $t$-structure in general. However they do in the case the recollement situation $\rec$ is the lower part of a \emph{2-recollement}, \ie there exists a diagram of the form
\begin{equation} 
\xymatrix{
  \mathbf{C}^0  & \mathbf{C}  & \mathbf{C}^1
  \ar@{<-}@<9pt> "1,1";"1,2" ^{i_1}
  \ar@<3pt> "1,1";"1,2" |{i_2}
  \ar@{<-}@<-3pt> "1,1";"1,2" |{i_3}
  \ar@<-9pt> "1,1";"1,2" _{i_4}
  \ar@{<-}@<9pt> "1,2";"1,3" ^{q_1}
  \ar@<3pt> "1,2";"1,3" |{q_2}
  \ar@{<-}@<-3pt> "1,2";"1,3" |{q_3}
  \ar@<-9pt> "1,2";"1,3" _{q_4}
}
\end{equation}
where both
\begin{equation}
\rec_2=\xymatrix{
  \mathbf{C}^0  &\mathbf{C} & \mathbf{C}^1
  \ar|{i_2} "1,1";"1,2" 
  \ar@<8pt>^{i_3} "1,2";"1,1" 
  \ar@<-8pt>_{i_1} "1,2";"1,1" 
  \ar|{q_2} "1,2";"1,3" 
  \ar@<8pt>^{q_3} "1,3";"1,2" 
  \ar@<-8pt>_{q_1} "1,3";"1,2" 
}
\end{equation}
and
\begin{equation}
\rec_3=\xymatrix{
 \mathbf{C}^1 & \mathbf{C}  & \mathbf{C}^0
  \ar|{q_3} "1,1";"1,2" 
  \ar@<8pt>^{q_4} "1,2";"1,1" 
  \ar@<-8pt>_{q_2} "1,2";"1,1" 
  \ar|{i_3} "1,2";"1,3" 
  \ar@<8pt>^{i_4} "1,3";"1,2" 
  \ar@<-8pt>_{i_2} "1,3";"1,2" 
}
\end{equation}
are recollements, with $\rec=\rec_3$. Indeed, in this situation one has
\begin{align*}
(\D^0\glue \D^1)^\bigstar_{\ge 0} &= \Big\{ X\in\D\mid (\{q,i_R\} X\in \D_{\ge 0} \Big\}\\
&= \Big\{ X\in\mathbf{C}\mid (\{i_3,q_2\} X\in \mathbf{C}_{\ge 0} \Big\}\\
&=(\mathbf{C}^0\glue^{\rec_2} \mathbf{C}^1)_{\ge 0}.
\end{align*}
More generally, an $n$-recollement is defined as the datum of three stable $\infty$-categories $\mathbf{C}^0, \mathbf{C}, \mathbf{C}^1$ organized in a diagram
\begin{equation}\label{diag:nreco}
  \xymatrix{
    \CC^0  & \CC  & \CC^1
    \ar@<5pt> "1,1";"1,2" |{i_2}
    \ar@{<-}@<11pt> "1,1";"1,2" ^{i_1}
    \ar@{<-}@<-3pt> "1,1";"1,2" |{i_3}
    \ar@<-10pt>@{}|\vdots "1,1";"1,2" 
    \ar@{<-}@<-15pt> "1,1";"1,2" _{i_{n+2}}
    \ar@<5pt> "1,2";"1,3" |{q_2}
    \ar@{<-}@<11pt> "1,2";"1,3" ^{q_1}
    \ar@{<-}@<-3pt> "1,2";"1,3" |{q_3}
    \ar@<-10pt>@{}|\vdots "1,2";"1,3" 
    \ar@{<-}@<-15pt> "1,2";"1,3" _{q_{n+2}}
  }
\end{equation}with $n+2$ functors on each edge, such that every consecutive three functors form recollements  $\rec_{2k} = (i_{2k}, q_{2k})$, $\rec_{2h+1} = (q_{2h+1}, i_{2h+1})$, for $k=1, \dots, n-1$, $h=1, \dots, n-2$, see \cite[Def. \textbf{2}]{nrecol1}.  Applications of this formalism to derived categories of algebras, investigating the relationships between the recollements of derived categories
and the Gorenstein properties of these algebras, can be found in \cite{nrecol1,nrecol2}.
\end{remark}

\begin{notat}
\label{notat:recdec}
It is worth to notice that $\D^0\glue \D^1$ \index{.D^0glueD^1@$\D^0\glue \D^1$}has no real meaning as a category; this is only an intuitive shorthand to denote the pair $(\D,\tee_0\glue\tee_1)$; \marginnote{\dbend} more explicitly, it is a shorthand to denote the following situation:
\begin{quote}
The stable $\infty$-category $\D$ fits into a recollement $(i,q)\colon \D^0\lrlarrows \D \lrlarrows \D^1$, $t$-structures on $\D^0$ and $\D^1$ have been chosen, and $\D$ is endowed with the glued $t$-structure $\tee_0\glue \tee_1$.
\end{quote}
\end{notat}
A proof of the gluing theorem in the classical setting of triangulated categories can be found in \cite[Thm. \textbf{7.2.2}]{Banagl} or in the standard reference \cite{BBDPervers}. We briefly sketch the argument given in \cite{Banagl} as we will need it in the torsio-centric reformulation of the gluing theorem. 
\begin{proof}[Proof of Thm. \protect{\ref{gluing}}]
We begin showing the way in which every $X\in\D$ fits into a fiber sequence $SX\to X\to RX$ where $SX\in (\D^0\glue \D^1)_{\ge 0}, RX\in (\D^0\glue \D^1)_{<0}$. \index{Gluing!co\fshyp{}reflection of a ---}
Let $\fF_i$ denote the normal torsion theory on $\D^i$, inducing the $t$\hyp{}structure $\tee_i$; let $\eta_1 \colon qX\to R_1 qX$ be the arrow in the fiber sequence
\begin{equation}\label{fibseqofq}
S_1 qX \xto{\epsilon_1} qX\xto{\eta_1} R_1 qX
\end{equation}
obtained thanks to $\fF_1$; let $\hat\eta$ be its \emph{mate} $X\to q_R R_1 qX$ in $\D$ under the adjunction $q\dashv q_R$, and let $WX=\fib(\hat \eta)$. 

Now, consider $i_L WX$ in the fiber sequence 
\[
S_0 i_L WX \xto{\sigma_0} i_L WX \xto{\theta_0} R_0 i_LWX\]
induced by $\fF_0$ on $\D_0$, and its mate $\hat \theta\colon WX\to i R_0 i_LWX$; take its fiber $SX$, and the object $RX$ defined as the pushout of $i R_0 i_L WX \xot{\hat\theta}
 WX \to X$.

To prove that these two objects are the candidate co/trun\-ca\-tion we consider the diagram
\[\label{star}
\xymatrix{
  SX  & WX  & X \\
  0 & i R_0 i_L WX  & RX \\
    & 0 & q_R R_1 qX
  \ar "1,1";"1,2" 
  \ar "1,1";"2,1" 
  \ar "1,2";"1,3" 
  \ar "1,2";"2,2" _{\hat\theta}
  \ar@{.>}@/^1.4pc/ "1,3";"3,3" ^{\hat\eta}
  \ar "1,3";"2,3" 
  \ar "2,1";"2,2" 
  \ar "2,2";"2,3" 
  \ar "2,2";"3,2" 
  \ar "2,3";"3,3" 
  \ar "3,2";"3,3" 
}\]
where all the mentioned objects fit, and where every square is a pullout. We have to prove that $SX\in (\D^0\glue \D^1)_{\ge 0}$ and $RX\in (\D^0\glue \D^1)_{<0}$. To do this, apply the functors $q, i_L, i_R$ to (\refbf{star}), obtaining the following diagram of pullout squares (recall the e\-xact\-ness properties of the recollement functors, stated in \aprop \refbf{recoexact}):
\[
\scalebox{.9}{%
\xymatrix@C=6mm{
  qSX & qWX & qX \\
  0 & 0 & q RX \\
    & 0 & R_1 qX
  \ar^\sim "1,1";"1,2" 
  \ar "1,1";"2,1" 
  \ar "1,2";"1,3" 
  \ar "1,2";"2,2" 
  \ar "1,3";"2,3" 
  \ar@{=} "2,1";"2,2" 
  \ar "2,2";"2,3" 
  \ar@{=} "2,2";"3,2" 
  \ar "2,3";"3,3" 
  \ar "3,2";"3,3" 
}
\xymatrix@C=6mm{
  i_L SX  & i_L WX  & i_L X \\
  0 & R_0 i_L WX  & i_L RX \\
    & 0 & i_L q_R R_1 qX
  \ar@{} "1,1";"2,2" |{\uno}
  \ar "1,1";"1,2" 
  \ar "1,1";"2,1" 
  \ar "1,2";"1,3" 
  \ar "1,2";"2,2" 
  \ar "1,3";"2,3" 
  \ar "2,1";"2,2" 
  \ar "2,2";"2,3" 
  \ar "2,2";"3,2" 
  \ar "2,3";"3,3" 
  \ar "3,2";"3,3" 
}
\xymatrix@C=6mm{
  i_R SX  & i_R WX  & i_R X \\
  0 & R_0 i_L WX  & i_R RX \\
    & 0 & 0
  \ar "1,1";"1,2" 
  \ar "1,1";"2,1" 
  \ar "1,2";"1,3" 
  \ar "1,2";"2,2" 
  \ar "1,3";"2,3" 
  \ar "2,1";"2,2" 
  \ar^\sim "2,2";"2,3" 
  \ar "2,2";"3,2" 
  \ar "2,3";"3,3" 
  \ar@{=} "3,2";"3,3" 
}}\notag
\]
where we took into account the relations $qi=0, i_Rq_R = 0 = i_L q_L$. We find that
\begin{itemize}
\item $qSX\cong qWX\cong S_1 qX\in \D_{\ge 0}^1$, since $0\to S_1 qX$ lies in $\MM_1$, and $qRX\cong R_1 qX\in\D_{<0}^1$;
\item $i_L SX\cong S_0 i_L WX \in \D_{\ge 0}^0$, by the pullout square \uno;
\item $i_R RX\cong R_0 i_L WX \in \D_{<0}^0$.
\end{itemize}
It remains to show that the two classes $\D_{\ge 0},\D_{<0}$ are orthogonal; to see this, suppose that $X\in \D_{\ge 0}$ and $Y\in \D_{<0}$. We consider the fiber sequence $ii_R Y\to Y\to q_R q Y$ of axiom $(\oldstylenums{4})$ in \abbrv{Def} \refbf{def:recol}, to obtain (applying the homological functor $\D(X, -)$) 
\begin{equation}
\xymatrix@R=2mm{
  \D(X, ii_R Y)	& \D(X, Y)	& \D(X,q_R q Y) \\
  \D(i_LX, i_R Y)	& 	& \D(qX, qY) \\
  0	& 	& 0
  \ar@{=} "1,1";"2,1" 
  \ar "1,1";"1,2" 
  \ar "1,2";"1,3" 
  \ar@{=} "1,3";"2,3" 
  \ar@{=} "2,1";"3,1" 
  \ar@{=} "2,3";"3,3" 
}
\end{equation}and we conclude, thanks to the exactness of this sequence.
\end{proof}
\begin{remark}
Strictly speaking, the domain of definition of the gluing operation $\glue$ is the set of triples $(\tee_0, \tee_1, \rec)$ where $(\tee_0,\tee_1)\in \ts (\D^0)\times \ts (\D^1)$ and $\rec=(i,q)$ is a recollement $\D^0\lrlarrows \D \lrlarrows \D^1$, but unless this (rather stodgy) distinction is strictly necessary we will adopt an obvious abuse of notation.
\end{remark}
\begin{remark}[A standard technique]\label{generalproced}
The procedure outlined above is in some sense paradigmatic, and it's worth to trace it out as an abstract way to deduce properties about objects and arrows fitting in a diagram like (\refbf{star}). This algorithm will be our primary technique to prove statements in the ``torsio-centric'' formulation of recollements:
\begin{itemize}
\item We start with a particular diagram, like for example (\refbf{star}) or (\refbf{twostar}) below; our aim is to prove that a property (being invertible, being the zero map, lying in a distinguished class of arrows, etc.\@\xspace) is true for an arrow $h$ in this diagram.
\item We apply (possibly only some of) the recollement functors to the diagram, and we deduce that $h$ has the above property from
\begin{itemize}
\item The recollement relations between the functors (\abbrv{Def} \refbf{def:recol});
\item The exactness of the recollement functors (\abbrv{Prop} \refbf{recoexact});
\item The joint reflectivity of the pairs $\{q, i_L\}$ and $\{q, i_R\}$ (Lemma \refbf{reflectsall});
\end{itemize}
\end{itemize}
\end{remark}

\section{Stable Recollements.}\label{stabrecoll}
\renewcommand{\textflush}{flushright}.
\epigraph{%
    \cjRL{	way*a.ha:loM w:hin*eh sul*AM mu.s*Ab 
		'ar:.sAh w:ro'+sO mag*i`a ha+s*AmAy:mAh 
		w:hin*eh mal:'a:key 'E:lohiym `oliym 
		w:yor:diym b*wo;}}
{\cite{BHS}, \textsc{Genesis} \oldstylenums{28}:\oldstylenums{12}}
\renewcommand{\textflush}{flushleft}.
\index{Gluing!Jacobladder@``Jacob ladder''}
\subsection{The Jacob's ladder: building co\fshyp{}reflections.}
The above procedure to build the functors $R,S$ depends on several choices (we forget half of the fiber sequence $S_1 q X\to qX\to R_1 qX$) and it doesn't seem independent from these choices, at least at first sight. 

The scope of this first subsection is to show that this apparent asymmetry arises only because we are hiding half of the construction, taking into account only half of the fiber sequence (\refbf{fibseqofq}). Given an object $X\in \D$ a dual argument yields \emph{another} way to construct a fiber sequence
\begin{equation}
	S' X\to X\to R' X
\end{equation}
out of the recollement data, which is naturally isomorphic to the former $SX\to X\to RX$. 

We briefly sketch how this dualization process goes: starting from the coreflection arrow $\epsilon_1 \colon S_1qX\to qX$, taking its mate $q_LS_1 qX\to X$ under the adjunction $q_L\dashv q$, and reasoning about its cofiber we can build a diagram which is dual to the former one, and where every square is a pullout:
\[\label{twostar}
\scalebox{.9}{\xymatrix{
                q_L S_1 q X & S'X & X \\
                0 & i S_0 i_R KX  & KX \\
                  & 0 & R'X
                \ar "1,1";"1,2" 
                \ar "1,1";"2,1" 
                \ar "1,2";"1,3" 
                \ar "1,2";"2,2" 
                \ar "1,3";"2,3" 
                \ar "2,1";"2,2" 
                \ar "2,2";"2,3" 
                \ar "2,2";"3,2" 
                \ar "2,3";"3,3" 
                \ar "3,2";"3,3" 
              }}
\]
\begin{proposition}[The Jacob's ladder]\label{thejacbo}
The two squares of the previous constructions fit into a ``ladder'' induced by canonical isomorphisms $SX\cong S'X, RX\cong R'X$; the construction is functorial in $X$. The ``Jacob's ladder'' is the following diagram:
\[\label{ladder:objs}
\scalebox{.9}{\xymatrix{
                q_L S_1 qX  & SX  & WX  & X \\
                0 & iS_0 i_R KX & CX  & KX \\
                  & 0 & i R_0 i_L WX  & RX \\
                  &   & 0 & q_R R_1 q X
                \ar "1,1";"1,2" 
                \ar "1,1";"2,1" 
                \ar "1,2";"1,3" 
                \ar "1,2";"2,2" 
                \ar "1,3";"1,4" 
                \ar "1,3";"2,3" 
                \ar "1,4";"2,4" 
                \ar "2,1";"2,2" 
                \ar "2,2";"2,3" 
                \ar "2,2";"3,2" 
                \ar "2,3";"2,4" 
                \ar "2,3";"3,3" 
                \ar "2,4";"3,4" 
                \ar "3,2";"3,3" 
                \ar "3,3";"3,4" 
                \ar "3,3";"4,3" 
                \ar "3,4";"4,4" 
                \ar "4,3";"4,4" 
              }}
\]
\end{proposition}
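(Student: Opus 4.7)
The plan is to exhibit the Jacob's ladder as a coherent $4\times 4$ diagram of pullouts that simultaneously encodes the fiber sequence $SX\to X\to RX$ coming from diagram (\refbf{star}) and the dual sequence $S'X\to X\to R'X$ coming from (\refbf{twostar}). The isomorphisms $SX\cong S'X$ and $RX\cong R'X$ will then drop out as two different readings of the same grid.

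The key preliminary observation is that the composite
\[
q_LS_1 qX \longrightarrow X \longrightarrow q_R R_1 qX
\]
obtained from the two mate arrows of the two constructions vanishes. Indeed, applying $q$ yields $S_1 qX \to qX \to R_1 qX$, which is zero by the fiber sequence (\refbf{fibseqofq}); applying $i_L$ kills the source since $i_L q_L=0$; applying $i_R$ kills the target since $i_R q_R=0$. By joint conservativity (Lemma \refbf{reflectsall}) the composite itself is zero. Hence $q_LS_1qX\to X$ factors canonically through $WX=\fib(X\to q_R R_1 qX)$ and dually $X\to q_R R_1 qX$ factors through $KX=\cofib(q_LS_1qX\to X)$.

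Next I would define the central object $CX$ as the pullout (equivalently pushout) of $WX\to X\leftarrow$ (fiber of) $X\to KX$; this places $CX$ simultaneously as $\cofib(q_LS_1qX\to WX)$ and $\fib(KX\to q_R R_1 qX)$. The stable $3\times 3$-lemma then assembles the six objects $q_LS_1qX,\,WX,\,X,\,CX,\,KX,\,q_R R_1 qX$ into a grid of pullouts occupying the four ``corner'' cells of the ladder. The remaining entries $iS_0 i_R KX$ at position $(2,2)$ and $iR_0 i_L WX$ at position $(3,3)$ are then inserted by placing $SX=\fib(WX\to iR_0 i_L WX)$ and $RX$ as in the original construction, and dually for $S'X$, $R'X$; one fills in the intermediate squares by taking pullouts in the natural way.

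To verify that every square of the resulting $4\times 4$ diagram is indeed a pullout, I would employ the standard procedure of Remark \refbf{generalproced}: apply each of the functors $q,\,i_L,\,i_R$ to the whole ladder and use the vanishing relations $qi=0$, $i_Lq_L=0$, $i_Rq_R=0$ together with the exactness of \aprop\refbf{recoexact}. Under $i_L$ the first column collapses and the resulting diagram is precisely the $i_L$-image of (\refbf{star}); under $i_R$ the last column collapses and one recovers the $i_R$-image of (\refbf{twostar}); under $q$ both inner columns collapse and one recovers merely the fiber sequence (\refbf{fibseqofq}). Joint conservativity of $\{q,i_L\}$ and $\{q,i_R\}$ then promotes each pullout verified on the three images to a pullout in $\D$, and in particular produces the canonical isomorphisms $SX\cong S'X$ and $RX\cong R'X$. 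Functoriality in $X$ is automatic, since every step of the construction (fibers, cofibers, units, counits, mates of adjunctions, and the co/reflections induced by the torsion theories $\fF_0$ and $\fF_1$) is a universal construction. The principal technical obstacle is simply the bookkeeping of sixteen nodes and a dozen squares; but repeated use of the pasting lemma reduces every verification to the basic recollement relations already established.
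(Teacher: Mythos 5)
Your proposal follows a genuinely different route from the paper. The paper's proof applies $q, i_L, i_R$ to diagram $(\refbf{twostar})$ to show that $S'X\in\D_{\ge 0}$ and $R'X\in\D_{<0}$, and then \emph{appeals to uniqueness of the truncation triangle} for the recoll\'ee normal torsion theory (the stable version of \cite[Prop.~\textbf{1.1.9}]{BBDPervers}, or equivalently uniqueness of the $(\EE,\MM)$-factorization): this immediately produces the isomorphisms $SX\cong S'X$, $RX\cong R'X$, after which the two sub-diagrams are glued. You instead try to build the ladder directly from a vanishing statement and the stable $3\times 3$-lemma, avoiding the uniqueness argument altogether. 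The strategy could in principle work, but as written it has a genuine gap and is too vague at the crucial point.

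The gap: you deduce that the composite $q_LS_1qX\to X\to q_RR_1qX$ is null ``by joint conservativity (Lemma~\refbf{reflectsall}).'' That lemma only asserts that $\{q,i_L\}$ and $\{q,i_R\}$ jointly reflect \emph{isomorphisms} (which the paper deduces from joint reflection of \emph{zero objects}). Joint reflection of \emph{zero arrows} is strictly stronger and false in general for exact conservative functors (phantom-map phenomenon: an arrow can vanish after applying a conservative exact functor without being nullhomotopic). The vanishing you want is nonetheless true, but for a different reason: by the adjunctions and $qq_L\cong\id\cong qq_R$ one has $\hom_\D(q_L S_1 qX, q_R R_1 qX)\cong\hom_{\D^1}(S_1qX, R_1qX)$, under which the composite is carried to $\eta_1\circ\epsilon_1\simeq 0$; equivalently, axiom $(\oldstylenums{4})$ gives the fiber sequence $\hom(i_LA,i_RB)\to\hom(A,B)\to\hom(qA,qB)$ whose first term dies here since $i_Lq_L=0$. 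Either of these replaces your incorrect invocation.

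A second concern is that the assembly step (``stable $3\times 3$-lemma,'' ``fill in the intermediate squares by taking pullouts in the natural way,'' ``joint conservativity promotes each pullout'') is not spelled out carefully enough to deliver the actual content of the Proposition, namely the canonical identifications $SX\cong S'X$ and $RX\cong R'X$ between the objects defined by the two \emph{a priori} independent constructions $(\refbf{star})$ and $(\refbf{twostar})$. Constructing \emph{some} ladder with the six corner objects is not the same as showing the entries at positions $(2,2)$ and $(3,3)$ are $iS_0i_RKX$ and $iR_0i_LWX$ as produced by the two constructions; that identification is precisely what the paper extracts from the uniqueness of the normal factorization, and it does not fall out of the $3\times 3$-lemma for free (one must also fix the nullhomotopy of the composite, since the $3\times 3$ completion depends on it). If you want to keep the direct assembly route, you should make the chosen nullhomotopy canonical, then match the resulting ladder entries against those of $(\refbf{star})$ and $(\refbf{twostar})$ square by square, using the standard technique of Remark~\refbf{generalproced}; at that point it is arguably simpler, and it is what the paper does, to just verify $S'X\in\D_{\ge 0}$, $R'X\in\D_{<0}$ and invoke uniqueness of truncation.
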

\begin{proof}
It suffices to prove that both $SX, S'X$ lie in $\D_{\ge 0}$ and both $RX, R'X$ lie in $\D_{\le 0}$; given this, we can appeal (a suitable stable $\infty$-categorical version of) \cite[\abbrv{Prop} \textbf{1.1.9}]{BBDPervers} which asserts the functoriality of the truncation functors, \ie that when the same object $X$ fits into \emph{two} fiber sequences arising from the same normal torsion theory, then there exist the desired isomorphisms.\footnote{In a torsio-centric perspective, this follows from the uniqueness of the factorization of a morphism with respect to the normal torsion theory having reflection $R$ and coreflection $S$.}

The procedure showing this is actually the same remarked in \refbf{generalproced}: we apply $q,i_L, i_R$ to the diagram (\refbf{twostar}) and we exploit exactness of the recollement functors to find pullout diagrams showing that $R'X\in \D_{<0}$ and $S'X\in \D_{\ge 0}$.

Once these isomorphisms have been found, it remains only to glue the two sub-diagrams
$$\scalebox{.8}{
\xymatrix@R=7mm@C=4mm{
 \gray{q_L S_1 qX}  & SX  & WX  & X \\
 \gray{0} & \gray{iS_0 i_R KX}  & \gray{CX} & \gray{KX} \\
  & 0 & i R_0 i_L WX  & RX \\
  &   & 0 & q_R R_1 q X
 \ar@{.>}@[semilightgray] "1,1";"1,2" 
 \ar@{.>}@[semilightgray] "1,1";"2,1" 
 \ar "1,2";"1,3" 
 \ar@{-} "1,2";"2,2" 
 \ar "1,3";"1,4" 
 \ar@{-} "1,3";"2,3" 
 \ar@{-} "1,4";"2,4" 
 \ar@{.>}@[semilightgray] "2,1";"2,2" 
 \ar@{.>}@[semilightgray] "2,2";"2,3" 
 \ar "2,2";"3,2" 
 \ar@{.>}@[semilightgray] "2,3";"2,4" 
 \ar "2,3";"3,3" 
 \ar "2,4";"3,4" 
 \ar "3,2";"3,3" 
 \ar "3,3";"3,4" 
 \ar "3,3";"4,3" 
 \ar "3,4";"4,4" 
 \ar "4,3";"4,4" 
}
  \xymatrix@R=7mm@C=4mm{
    q_L S_1 qX  & S'X & \gray{WX} & X \\
    0 & iS_0 i_R KX & \gray{CX} & KX \\
      & 0 & \gray{i R_0 i_L WX} & R'X \\
      &   & \gray{0}  & \gray{q_R R_1 q X}
    \ar "1,1";"1,2" 
    \ar "1,1";"2,1" 
    \ar@{-} "1,2";"1,3" 
    \ar "1,2";"2,2" 
    \ar "1,3";"1,4" 
    \ar@{.>}@[semilightgray] "1,3";"2,3" 
    \ar "1,4";"2,4" 
    \ar "2,1";"2,2" 
    \ar@{-} "2,2";"2,3" 
    \ar "2,2";"3,2" 
    \ar "2,3";"2,4" 
    \ar@{.>}@[semilightgray] "2,3";"3,3" 
    \ar "2,4";"3,4" 
    \ar@{-} "3,2";"3,3" 
    \ar "3,3";"3,4" 
    \ar@{.>}@[semilightgray] "3,3";"4,3" 
    \ar@{.>}@[semilightgray] "3,4";"4,4" 
    \ar@{.>}@[semilightgray] "4,3";"4,4" 
  }
}$$
to obtain the ladder. Now, this construction is obtained by taking into account the fiber sequence $S_1 q X\to q X\to R_1 q X$ as a whole, and since this latter object is uniquely determined up to isomorphism, we obtain a diagram of endofunctors
\[\label{ladder:arrows}
  \scalebox{.8}{\xymatrix{
                  q_L S_1 q & S & W & 1 \\
                  0 & iS_0 i_R K  & C & K \\
                    & 0 & i R_0 i_L W & R \\
                    &   & 0 & q_R R_1 q
                  \ar "1,1";"1,2" 
                  \ar "1,1";"2,1" 
                  \ar "1,2";"1,3" 
                  \ar "1,2";"2,2" 
                  \ar "1,3";"1,4" 
                  \ar "1,3";"2,3" 
                  \ar "1,4";"2,4" 
                  \ar "2,1";"2,2" 
                  \ar "2,2";"2,3" 
                  \ar "2,2";"3,2" 
                  \ar "2,3";"2,4" 
                  \ar "2,3";"3,3" 
                  \ar "2,4";"3,4" 
                  \ar "3,2";"3,3" 
                  \ar "3,3";"3,4" 
                  \ar "3,3";"4,3" 
                  \ar "3,4";"4,4" 
                  \ar "4,3";"4,4" 
                }}
\]where every square is a pullout (again giving to a category of functors the obvious stable structure \cite[\abbrv{Prop} \textbf{1.1.3.1}]{LurieHA}), and where the functorial nature of $W$, $K$ and $C$ is a consequence of their construction. Notice also that this latter diagram of functors uses homogeneously all the recollement functors, and that it is ``symmetric'' with respect to the antidiagonal (it switches left and right adjoints, as well as reflections and coreflections).
\end{proof}
The functors $S,R$ are the co/truncations for the recoll\'ee $t$-structure, and 
the normality of the torsion theory is witnessed by the pullout subdiagram
\[\label{normalite}
  \scalebox{.8}{\xymatrix@!R=7mm{
                  SX  & \gray{WX} & X \\
                  \gray{iS_0 i_RKX} & \gray{CX} & \gray{KX} \\
                  0 & \gray{i R_0 i_L WX} & RX.
                  \ar@{}|(.3)\lrcorner "1,1";"2,2" 
                  \ar@{-} "1,1";"1,2" 
                  \ar@{-} "1,1";"2,1" 
                  \ar "1,2";"1,3" 
                  \ar@{.>}@[semilightgray] "1,2";"2,2" 
                  \ar@{-} "1,3";"2,3" 
                  \ar@{.>}@[semilightgray] "2,1";"2,2" 
                  \ar "2,1";"3,1" 
                  \ar@{.>}@[semilightgray] "2,2";"2,3" 
                  \ar@{.>}@[semilightgray] "2,2";"3,2" 
                  \ar "2,3";"3,3" 
                  \ar@{-} "3,1";"3,2" 
                  \ar "3,2";"3,3" 
                  \ar@{}|(.3)\ulcorner "3,3";"2,2" 
                }}
\]\begin{notat}
From now on, we will always refer to the diagram above as ``the Jacob ladder'' of an object $X\in\D$, and/or to the diagram induced by a morphism $f\colon X\to Y$ between the ladder of the domain and the codomain, \ie to three-dimensional diagrams like
\[\label{functorial-ladder}
\scalebox{.8}{ 
\xymatrix@R=4.5mm@C=2mm{
    & q_L S_1 q Y &   & \textcolor{black}{SY} &   & WY  &   & \textcolor{black}{Y} \\
  q_L S_1 qX  &   & \textcolor{black}{SX} &   & WX  &   & \textcolor{black}{X} \\
    &   &   &   &   &   &   & KY \\
  0 &   & i S_0 i_R KX  &   & CX  &   & KX \\
    &   &   &   &   &   &   & \textcolor{black}{RY} \\
    &   & 0 &   & i R_0 i_L WX  &   & \textcolor{black}{RX} \\
    &   &   &   &   &   &   & q_R R_1 qY \\
    &   &   &   & 0 &   & q_R R_1 qX
  \ar "1,2";"1,4" 
  \ar "1,4";"1,6" 
  \ar "1,6";"1,8" 
  \ar "1,8";"3,8" 
  \ar "2,1";"1,2" 
  \ar "2,1";"2,3" 
  \ar "2,1";"4,1" 
  \ar "2,3";"1,4" ^{Sf}
  \ar "2,3";"2,5" 
  \ar "2,3";"4,3" 
  \ar "2,5";"1,6" ^{Wf}
  \ar "2,5";"2,7" 
  \ar "2,5";"4,5" 
  \ar "2,7";"4,7" 
  \ar^f "2,7";"1,8" 
  \ar "3,8";"5,8" 
  \ar "4,1";"4,3" 
  \ar "4,3";"6,3" 
  \ar "4,3";"4,5" 
  \ar "4,5";"6,5" 
  \ar "4,5";"4,7" 
  \ar "4,7";"6,7" 
  \ar "4,7";"3,8" ^{Kf}
  \ar "5,8";"7,8" 
  \ar "6,3";"6,5" 
  \ar "6,5";"6,7" 
  \ar "6,5";"8,5" 
  \ar "6,7";"8,7" 
  \ar "6,7";"5,8" ^{Rf}
  \ar "8,5";"8,7" 
  \ar "8,7";"7,8" 
}}
\]
\end{notat}
\subsection{The {\sc ntt} of a recollement.}
\index{Gluing!\smallcap{ntt} of a ---}
Throughout this subsection we outline the torsio-centric translation of the classical results recalled above. In particular we give an explicit definition of the $\glue$ operation when it has been ``transported'' to the set of normal torsion theories, independent from its characterization in terms of the pairs a\-isle-co\-a\-isle of the two $t$\hyp{}structures. From now on we assume given a recollement
$$
\xymatrix{
  \D^0	& \D	& \D^1.
  \ar|i "1,1";"1,2" 
  \ar@<8pt> "1,2";"1,1" ^{i_L}
  \ar@<-8pt> "1,2";"1,1" _{i_R}
  \ar|q "1,2";"1,3" 
  \ar@<8pt> "1,3";"1,2" ^{q_L}
  \ar@<-8pt> "1,3";"1,2" _{q_R}
}$$
Given $t$\hyp{}structures $\tee_i\in  \ts (\D^i)$, in view of our ``Rosetta stone'' theorem \cite{FL0}, there exist normal torsion theories $\fF_i=(\EE_i, \MM_i)$ on $\D^i$ such that $(\D^i_{\ge 0}, \D^i_{<0})$ are the classes $(0/\EE_i,\MM_i/0)$ of torsion and torsion-free objects of $\D^i$, for $i=0,1$; an object $X$ lies in $(\D^0\glue \D^1)_{\ge 0}$ if and only if $qX\in \EE_1$ and $i_LX\in \EE_0$\footnote{Thanks to the Sator lemma we are allowed to use ``$X\in \K$'' as a shorthand to denote that either the initial arrow $\var{0}{X}$ or the terminal arrow $\var{X}{0}$ lie in a 3-for-2 class $\K \subset \hom(\CC)$. From now on we will adopt this notation.}, and similarly an object $Y$ lies in $\D_{\le 0}$ if and only if $qY\in \MM_1$ and $i_RY\in \MM_0$.
\begin{remark}\label{must-itself-come}
The $t$\hyp{}structure $\tee = \tee_0\glue \tee_1$ on $\D$ must itself come from a normal torsion theory which we denote $\fF_0\glue \fF_1$ on $\D$, so that $\big((\D^0\glue \D^1)_{\ge 0},(\D^0\glue \D^1)_{<0}\big)=\big(0/(\EE_0\glue\EE_1), (\MM_0\glue \MM_1)/0\big)$; in other words the following three conditions are equivalent for an object $X\in\D$:
\begin{itemize}
\item $X$ lies in $(\D^0\glue \D^1)_{\ge 0}$;
\item $X$ lies in $\EE_0\glue\EE_1$, \ie $RX\cong 0$ in the notation of (\refbf{normalite});
\item $\{q, i_L\}(X)\in \EE$, following Notation \refbf{veryshort}.
\end{itemize}
\end{remark}
We now aim to a torsio-centric characterization of the classes $(\EE_0\glue\EE_1, \MM_0\glue \MM_1)$, relying on the factorization properties of $(\EE_i, \MM_i)$ alone: since we proved \abbrv{Thm}\refbf{gluing} above, there must be a normal torsion theory $\fF_0\glue \fF_1 = \big(\EE_0\glue\EE_1, \MM_0\glue\MM_1\big)$ inducing $\tee_0\glue\tee_1$ as $\big(0/(\EE_0\glue\EE_1), (\MM_0\glue\MM_1)/0\big)$: in other words,
\begin{quote}
$\fF_0\glue \fF_1$ is the (unique) normal torsion theory whose torsion\fshyp{}torsionfree classes are $\big((\D^0\glue \D^1)_{\ge 0},(\D^0\glue \D^1)_{<0}\big)$ of \abbrv{Thm}\refbf{gluing},
\end{quote}
Clearly this is only an application of our ``Rosetta stone'' theorem, so in some sense this result is ``tautological''. But there are at least two reasons to concentrate in ``proving again'' \abbrv{Thm}\refbf{gluing} from a torsio-centric perspective:
\begin{itemize}
\item The construction offered by the Rosetta stone is rather indirect, and only appropriate to show formal statements about the factorization system $\fF(\tee)$ induced by a $t$\hyp{}structure;
\item In a stable setting, the torsio-centric point of view, using factorization systems, is more primitive and more natural than the classical one using 1-categorical arguments (\ie, $t$\hyp{}structures $\tee$ on the homotopy category of a stable $\D$ are induced by normal torsion theories in $\D$; in the quotient process one loses important informations about $\tee$).
\end{itemize}
Both these reasons lead us to adopt a ``constructive'' point of view, giving an explicit characterization of $\fF_0\glue \fF_1$ which relies on properties of the factorization systems $\fF_0$, $\fF_1$ alone, independent from triangulated categorical arguments. 

In the following section we will discuss the structure and properties of the factorization system $\fF_0\glue \fF_1$, concentrating on a self-contained and categorically well motivated construction of the classes $\EE_0\glue\EE_1$ and $\MM_0\glue\MM_1$ starting from an obvious \emph{ansatz} which follows Remark \refbf{must-itself-come}.

The discussion above, and in particular the fact that an initial\fshyp{}terminal arrow $0\leftrightarrows X$ lies in $\EE_0\glue\EE_1$ if and only if $\{q, i_L\}(X)\in \EE$, suggests that we define 
$\EE_0\glue\EE_1 = \big\{f\in\hom(\D)\mid 
\{q, i_L\}(f)\in \EE 
\big\}$ and $\MM_0\glue\MM_1 = \big\{g\in\hom(\D)\mid 
\{q, i_R\}(g) \in \MM
\big\}$. Actually it turns out that this guess is not far to be correct: the correct classes are indeed given by the following: 
\begin{theorem}
\label{thm:trueglued}
Let $\D$ be a stable $\infty$-category, in a recollement 
$$
(i,q)\colon \D^0 \lrlarrows  \D  \lrlarrows  \D^1,
$$
and let $\tee_i$ be a $t$-structure on $\D^i$. Then the recoll\'ee $t$-structure $\tee_0\glue \tee_1$ is induced by the normal torsion theory $(\EE_0\glue\EE_1,\MM_0\glue\MM_1)$ with classes
\begin{gather}
\EE_0\glue\EE_1 = \big\{f\in\hom(\D)\mid 
\{q, i_LW\}(f)\in \EE 
\big\};\label{BBDleftclass}\\
\MM_0\glue\MM_1 = \big\{g\in\hom(\D)\mid 
\{q, i_RK\}(g) \in \MM
\big\}.
\end{gather}
\end{theorem}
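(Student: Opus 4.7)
The strategy is to leverage the Rosetta stone theorem of \cite{FL0}: since $\tee_0\glue\tee_1$ is an honest $t$-structure by Theorem \ref{gluing}, it is induced by a unique normal torsion theory $(\EE,\MM)$ on $\D$, and the task reduces to identifying $\EE$ with $\EE_0\glue\EE_1$ and $\MM$ with $\MM_0\glue\MM_1$ as defined. Because the glued $t$-structure is determined by its torsion and torsionfree \emph{objects}, it suffices to verify that the candidate classes in the statement form a normal torsion theory whose associated torsion and torsionfree objects recover the description of Theorem \ref{gluing}.

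The object-level verification specializes the definitions to initial and terminal arrows. On $0\to X$, since $W$ preserves the zero object (being iteratively built as a fiber), we have $W(0\to X)=(0\to WX)$, and membership in $\EE_0\glue\EE_1$ becomes the pair of conditions $qX\in \EE_1$ together with $i_LWX\in \EE_0$; the first condition forces $R_1qX=0$ and therefore $q_RR_1qX=0$, whereupon the Jacob's ladder of Proposition \ref{thejacbo} collapses to $WX\cong X$, so the second condition reads simply $i_LX\in \EE_0$. This exactly reproduces the description $(\D^0\glue\D^1)_{\ge 0}=\{X\mid qX\in\D^1_{\ge 0}\text{ and }i_LX\in\D^0_{\ge 0}\}$ of Theorem \ref{gluing}, and the dual argument for terminal arrows $X\to 0$ (using $KX\cong X$ whenever $qX\in\MM_1$, since then $S_1qX=0$ and hence $q_LS_1qX=0$) recovers the analogous characterization of $(\D^0\glue\D^1)_{<0}$.

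It remains to check that the pair $(\EE_0\glue\EE_1, \MM_0\glue\MM_1)$ is itself a normal torsion theory. The 3-for-2 closure of both classes is automatic: the recollement functors $q$, $i_L$, $i_R$ are exact by Proposition \ref{recoexact}, and $W$, $K$ are iterated fibers/cofibers of natural transformations among exact functors and so preserve pullouts; thus the defining conditions are pulled back from the 3-for-2 classes $\EE_i, \MM_i$. For the existence of an $(\EE,\MM)$-factorization of an arbitrary $f\colon X\to Y$, the candidate is read off from the functorial Jacob's ladder (diagram \ref{functorial-ladder}) applied to $f$, giving the factorization through the glued truncation, with normality encoded precisely by the pullout subdiagram \ref{normalite} living inside the ladder. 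The main obstacle, and the step requiring the most care, is orthogonality $(\EE_0\glue\EE_1)\perp(\MM_0\glue\MM_1)$ for arbitrary arrows rather than just objects: given a commuting square with left leg $e\in\EE_0\glue\EE_1$ and right leg $m\in\MM_0\glue\MM_1$, one must produce a unique diagonal filler by applying the recollement functors, reducing the lifting problem on $\D^1$ (immediately, via $q$) and on $\D^0$ (more delicately, since the hypotheses mix $i_LW$ on the $\EE$-side with $i_RK$ on the $\MM$-side — here the Jacob's ladder symmetry $SX\cong S'X$, $RX\cong R'X$ of Proposition \ref{thejacbo} is the essential input, ensuring that the two a priori asymmetric descriptions encode the same truncation data), and finally reassembling by joint conservativity (Lemma \ref{reflectsall}).
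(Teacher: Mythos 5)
Your strategy is genuinely different from the paper's: you try to show directly that the candidate pair $(\EE_0\glue\EE_1,\MM_0\glue\MM_1)$ is a normal torsion theory and then invoke Rosetta-stone uniqueness, whereas the paper never independently verifies that the candidate is an NTT. Instead it takes as given (via the Rosetta stone applied to Theorem \refbf{gluing}) that the glued $t$-structure already has a unique NTT with reflection $R$, whose left class is $\{f : Rf\text{ is an isomorphism}\}$ by general principle; the entire content of the proof is then the equivalence $Rf\text{ iso}\iff\{q,i_LW\}(f)\in\EE$, proved by applying $q$ and $i_L$ to the functorial Jacob's ladder (\refbf{functorial-ladder}), reading off $qRf\cong R_1qf$, running a five-lemma argument on a morphism of fiber sequences to control $i_LRf$, and closing both directions by joint conservativity (Lemma \refbf{reflectsall}).

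Your object-level reduction ($WX\cong X$ whenever $qX\in\D^1_{\ge 0}$, hence $\{q,i_LW\}(X)\in\EE\iff\{q,i_L\}(X)\in\EE$) is correct and is in fact recorded almost verbatim as the remark the paper places \emph{after} the proof; but it only establishes the claim on initial and terminal arrows, and within your strategy it is only useful once you already know the candidate pair is an NTT. That is exactly where the argument breaks. The orthogonality sketch cannot be completed as written: Lemma \refbf{reflectsall} reflects \emph{isomorphisms} jointly, and gives no mechanism for "reassembling" a diagonal filler from fillers produced separately in $\D^0$ and $\D^1$ --- the functors $q$ and $i_L$ are not jointly full, and moreover the $\EE$-condition tests $i_LW$ while the $\MM$-condition tests $i_RK$, so the transported lifting problem in $\D^0$ does not even sit inside a single orthogonality square there. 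Likewise, claiming the $(\EE,\MM)$-factorization of an arbitrary $f$ "can be read off from the functorial Jacob's ladder applied to $f$" is not an argument: the ladder of $f$ produces the maps $Sf$, $Rf$ between the truncation data of $X$ and $Y$, not a factorization $X\to M\to Y$ of $f$ itself. To close these gaps you would end up proving that the candidate left class is exactly $\{f: Rf\text{ iso}\}$ for the reflection $R$ of the glued NTT --- which is precisely the equivalence the paper proves --- so the ladder-plus-five-lemma argument is not avoidable, and your proposal as written has a genuine gap rather than a shorter alternative route.
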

\begin{proof}
We only need to prove the statement for $\EE_0\glue\EE_1$, since the statement for $\MM_0\glue\MM_1$ is completely specular. Thanks to the discussion in section \S\refbf{classical}, an arrow $f\in\hom(\D)$ lies in $\EE_0\glue \EE_1$ if and only if $Rf$ (as constructed in the Jacob ladder (\refbf{functorial-ladder})) is an isomorphism in $\D$, so we are left to prove that, given $f\in\hom(\D)$:
\begin{equation}
\text{$Rf$ is an isomorphism in $\D$}\quad\Leftrightarrow\quad 
\text{$\{q, i_LW\}(f)\in \EE$}.
\end{equation}
Equivalently, we have to prove that
\begin{equation}
\text{$Rf$ is an isomorphism}\quad\Leftrightarrow\quad 
\text{$\{R_1q, R_0i_LW\}(f)$ are isomorphisms}.
\end{equation}
We begin by showing that if $\{ R_1q, R_0i_LW\}(f)$ are isomorphisms, then also $Rf$ is an isomorphism.
By the joint conservativity of the recollement data (Lemma \ref{reflectsall}) we need to prove that  if $\{ R_1q, R_0i_LW\}(f)$ are isomorphisms, then both $qRf$ and $i_LRf$ are isomorphisms.
Apply the functor $q$ to the Jacob ladder (\refbf{functorial-ladder}), to obtain

\begin{equation}\label{q-of-Jacob}
\scalebox{.8}{ 
\xymatrix@R=2mm@C=3mm{
    & S_1 q Y &   & q SY  &   & q WY  &   & q Y \\
  S_1 qX  &   & q SX  &   & q WX  &   & q X \\
    &   &   &   &   &   &   & q KY \\
  0 &   & 0 &   & q CX  &   & q KX \\
    &   &   &   &   &   &   & q RY \\
    &   & 0 &   & 0 &   & qRX \\
    &   &   &   &   &   &   & R_1 qY \\
    &   &   &   & 0 &   & R_1 qX
  \ar^\sim "1,2";"1,4" 
  \ar^\sim "1,4";"1,6" 
  \ar "1,6";"1,8" 
  \ar "1,8";"3,8" 
  \ar "2,1";"1,2" 
  \ar^\sim "2,1";"2,3" 
  \ar "2,1";"4,1" 
  \ar "2,3";"1,4" 
  \ar^\sim "2,3";"2,5" 
  \ar "2,3";"4,3" 
  \ar "2,5";"1,6" 
  \ar "2,5";"2,7" 
  \ar "2,5";"4,5" 
  \ar "2,7";"4,7" 
  \ar "2,7";"1,8" 
  \ar^\wr "3,8";"5,8" 
  \ar@{=} "4,1";"4,3" 
  \ar@{=} "4,3";"6,3" 
  \ar^\sim "4,3";"4,5" 
  \ar^\wr "4,5";"6,5" 
  \ar "4,5";"4,7" 
  \ar^\wr "4,7";"6,7" 
  \ar "4,7";"3,8" 
  \ar^\wr "5,8";"7,8" 
  \ar@{=} "6,3";"6,5" 
  \ar "6,5";"6,7" 
  \ar@{=} "6,5";"8,5" 
  \ar^\wr "6,7";"8,7" 
  \ar "6,7";"5,8" 
  \ar "8,5";"8,7" 
  \ar "8,7";"7,8" 
}}
\end{equation} 
Hence $qRf$ is an isomorphism, since it fits into the square
\begin{equation}
  \xymatrix{
    q R X & q R Y \\
    R_1 q X & R_1 q Y.
    \ar "1,1";"1,2" 
    \ar_\wr "1,1";"2,1" 
    \ar^\wr "1,2";"2,2" 
    \ar_\sim "2,1";"2,2" 
  }
\end{equation} 
Now apply the functor $i_L$ to the Jacob ladder, obtaining 
\begin{equation} 
\label{diag1bis}
\scalebox{.8}{
\xymatrix@R=2mm@C=3mm{
    & 0 &   & i_L SY  &   & i_L WY  &   & i_L Y \\
  0 &   & i_L SX  &   & i_L WX  &   & i_L X \\
    &   &   &   &   &   &   & i_L KY \\
  0 &   & S_0 i_L KX  &   & i_L CX  &   & i_L KX \\
    &   &   &   &   &   &   & i_L RY \\
    &   & 0 &   & R_0 i_L WX  &   & i_L RX \\
    &   &   &   &   &   &   &i_Lq_R R_1 qY \\
    &   &   &   & 0 &   & i_Lq_R R_1 qX
  \ar "1,2";"1,4" 
  \ar "1,4";"1,6" 
  \ar "1,6";"1,8" 
  \ar_\wr "1,8";"3,8" 
  \ar "2,1";"2,3"
   \ar "2,1";"1,2" 
  \ar@{=} "2,1";"4,1" 
  \ar "2,3";"2,5" 
  \ar_\wr "2,3";"4,3" 
  \ar "2,3";"1,4" 
  \ar "2,5";"2,7" 
  \ar_\wr "2,5";"4,5" 
  \ar "2,5";"1,6" 
  \ar@{} "2,7";"3,8" |{\uno}
  \ar_\wr "2,7";"4,7" 
  \ar "2,7";"1,8" 
  \ar@{->>} "3,8";"5,8" 
  \ar "4,1";"4,3" 
  \ar "4,3";"4,5" 
  \ar "4,3";"6,3" 
  \ar "4,5";"4,7" 
  \ar@{->>} "4,5";"6,5" 
  \ar@{} "4,7";"5,8" |{\due}
  \ar@{->>} "4,7";"6,7" 
  \ar "4,7";"3,8" 
  \ar "5,8";"7,8" 
  \ar "6,3";"6,5" 
  \ar "6,5";"6,7" 
  \ar "6,5";"8,5" 
  \ar "6,7";"5,8" 
  \ar "6,7";"8,7" 
  \ar "8,5";"8,7" 
  \ar "8,7";"7,8"
}}
\end{equation}
As noticed above, $R_1q f$ is an isomorphism, so also 
$i_Lq_RR_1q f$ is an isomorphism. Then $i_LRf$ is an isomorphism by the five-lemma applied to the morphism of fiber sequences
\begin{equation}\label{morphism-of-fiber-sequences}
\scalebox{.8}{
\xymatrix@R=2mm@C=3mm{
            &   & R_0 i_L WY  & i_L RY \\
           R_0 i_L WX &   & i_L RX \\
            &   &   &i_Lq_R R_1 qY \\
           0  &   & i_Lq_R R_1 qX
  \ar "2,3";"4,3" 
   \ar "2,1";"2,3" 
  \ar "2,1";"4,1" 
  \ar "4,1";"4,3" 
    \ar "2,3";"1,4" 
   \ar "4,3";"3,4" 
  \ar "1,4";"3,4" 
  \ar "2,1";"1,3"
   \ar "1,3";"1,4" 
 }}
\end{equation}

Vice versa: assuming $Rf$ is an isomorphism in $\D$, we want to prove that $\{R_1 q , R_0i_LW \}(f)$ are isomorphisms.  Diagram (\refbf{q-of-Jacob}) gives directly that $R_1 qf$ is an isomorphism, since the square
\begin{equation}
  \xymatrix{
    q RX  & q RY \\
    R_1 q X & R_1 q Y
    \ar^\sim "1,1";"1,2" 
    \ar_\wr "1,1";"2,1" 
    \ar^\wr "1,2";"2,2" 
    \ar "2,1";"2,2" 
  }
\end{equation} is commutative.
Then, from diagram (\ref{morphism-of-fiber-sequences}) we see that, since both $i_Lq_RR_1qf$ and $i_LRf$ are isomorphisms, so is also $R_0i_LWf$. 
\end{proof}
\begin{remark}
From the sub-diagram
\begin{equation} 
\xymatrix{
  i_L X & i_L Y \\
  i_L KX  & i_L KY \\
  i_L RX  & i_L RY
  \ar@{} "1,1";"2,2" |{\uno}
  \ar "1,1";"1,2" ^{i_L f}
  \ar_\wr "1,1";"2,1" 
  \ar^\wr "1,2";"2,2" 
  \ar@{} "2,1";"3,2" |{\due}
  \ar@{->>} "2,1";"2,2" 
  \ar@{->>} "2,1";"3,1" 
  \ar@{->>}@{->>} "2,2";"3,2" 
  \ar_\sim "3,1";"3,2" 
}
\end{equation} of diagram (\refbf{diag1bis}) one deduces that if $Rf$ is an isomorphism, then $i_L f\in \EE_0$, by the 3-for-2 closure property of $\EE_0$. This mean that $\{ q, i_LW\}(f)\in \EE$ implies that $\{ q, i_L\}(f)\in \EE$. The converse implication has no reason to be true in general. However it is true for terminal (or initial) morphisms. Namely, from the Rosetta stone one has that $X\in \EE_0\glue\EE_1$ if and only if $X\in (\D^0\glue \D^1)_{\ge 0}$, and so if and only if $\{q, i_L\}(X)\in \EE$. On the other hand, $X\in \EE_0\glue\EE_1$ if and only if $\{q, i_LW\}(X)\in \EE$. The fact that the condition $\{q, i_L\}(X)\in \D_{\geq 0}$ is equivalent to the condition $\{q, i_LW\}(X)\in \D_{\geq 0}$ can actually be easily checked directly. Namely, if $qX\in \D^1_{\geq 0}$, then $q_RR_1qX=0$ and so $X=WX$ in this case. Specular considerations apply to the right class $\MM_0\glue\MM_1$.
\end{remark}
\section{Properties of recollements.}\label{properties}
\setlength{\epigraphwidth}{.75\textwidth}
\epigraph{
``Do what thou wilt'' shall be the whole of the Law.
The study of this Book is forbidden. It is wise to destroy this copy after the first reading.
Whosoever disregards this does so at his own risk and peril.}{\textsc{Ankh-ef-en-Khonsu i}}
\setlength{\epigraphwidth}{\DefaultEpigraphWidth}
In this section we address associativity issues for the $\glue$ operation: it is a somewhat subtle topic, offering examples of several non-trivial constructions even in the classical geometric case: it is our opinion that in a stable setting the discussion can be clarified by simple, well-known categorical properties. 

We start proving a generalization of \cite{Banagl,BBDPervers} where it is stated that the gluing operation can be iterated in a preferential way determined by a \emph{stratification} of an ambient space $X$. This result hides in fact an associativity property for the gluing operation, in a sense which our \abbrv{Thm}\refbf{geomgluing} below makes precise. 

Suitably abstracted to a stable setting, a similar result holds true, once we are given a \emph{Urizen compass} (a certain shape of diagram like in \abbrv{Def} \refbf{defUrizen}, implying certain relations and compatibilities between different recollements, which taken together ensure associativity).
\subsection{Geometric associativity of the gluing.}
An exhaustive account for the theory of stratified spaces can be found in \cite{pflaum2001analytic,Banagl,Wein}. Here, since we do not aim at a comprehensive treatment, we restrict to a sketchy recap of the basic definitions.

A \emph{stratified space} of length $n$ consists of a pair $(X, \textsf{s})$ where 
\begin{equation}\label{stratas}
\textsf{s} \quad : \quad \varnothing = U_{-1} \subset U_0\subset \dots\subset U_n\subset X = U_{n+1}
\end{equation} 
is a chain of closed subspaces of a space $X$, subject to various technical assumptions which ensure that the homology theory we want to attach to $(X, \mathsf{s})$ is ``well-behaved'' in some sense.

All along the following section, we will denote a \emph{pure stratum} of a stratified space $(X, \mathsf{s})$ the set-theoretical difference $E_i = U_i\setminus U_{i-1}$.
\begin{remark}
The definition is intentionally kept somewhat vague in various respects, first of all about the notion of ``space'': the definition of stratification can obviously be given in different contexts (topological spaces, topological manifolds, \textsc{pl}-manifolds,\@\xspace\dots) according to the needs of the specific theory we want to build; when the stratification $\textsf{s}$ is clear from the context, we indulge to harmless, obvious abuses of notation.
\end{remark}
The associativity properties of $\glue$ are deeply linked with the presence of a stratification on a space $X$, in the sense that a stratification $\mathsf{s}$ is what we need to induce additional recollements ``fitting nicely'' in the diagram of inclusions determined by $\mathsf{s}$. These recollements define a unique $t$\hyp{}structure $\tee_0\glue\cdots \glue \tee_n$, given $\tee_i$ on the derived categories of the pure strata.

To motivate the shape and the strength of the abstract conditions ensuring associativity of $\glue$, exposed in \S\refbf{abstractassoc}, and in particular the definition of a Urizen compass \refbf{defUrizen}, we have to dig into deep in the argument sketched in the geometric case in \cite[\textbf{2.1.2-3}]{BBDPervers}: we start by recalling
\begin{theorem}\cite[p. \textbf{158}]{Banagl}\label{geomgluing}
Let $(X, \textsf{s})$ be a stratified space, $\{E_0,\dots, E_n\}$ the set of its pure strata, and $\tee_i$ be a set of $t$\hyp{}structures, one on each $\D(E_i)$, for $i=0,\dots, n$.

Then there exists a uniquely determined $t$\hyp{}structure $\tee_0\glue \cdots \glue \tee_n$ on $\D(X)$, obtained by an iterated gluing operation as the parenthesization 
$(\cdots ((\tee_0\glue \tee_1)\glue \tee_2) \glue  \cdots\glue  \tee_{n-1})\glue \tee_n$. Following Notation \refbf{notat:recdec} we will refer to the pair $(\D(X), \tee_0\glue \cdots \glue \tee_n)$ as $\D(E_0)\glue \cdots \glue \D(E_n)$.
\end{theorem}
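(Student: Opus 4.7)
The plan is to proceed by induction on the length $n$ of the stratification, with the top open stratum playing the role of the ``outermost'' recollement datum. The base case $n = 0$ is trivial: the space $X$ consists of a single pure stratum $E_0$, so $\D(X) = \D(E_0)$ and $\tee_0$ itself is the desired $t$-structure.

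For the inductive step, observe that $U_n$ is closed in $X$ and its open complement is precisely the top pure stratum $E_n = X \setminus U_n$. Applying Example \refbf{geomrecoll} to the closed/open decomposition $U_n \hookrightarrow X \hookleftarrow E_n$ produces a recollement
\begin{equation}
\D(U_n) \lrlarrows \D(X) \lrlarrows \D(E_n).\notag
\end{equation}
The restriction of $\textsf{s}$ to $U_n$, namely $\varnothing = U_{-1} \subset U_0 \subset \cdots \subset U_{n-1} \subset U_n$, is a stratification of length $n-1$ whose pure strata are exactly $E_0, \dots, E_{n-1}$. By the inductive hypothesis, the $t$-structures $\tee_0,\dots,\tee_{n-1}$ glue to a unique iterated $t$-structure $\tee_0 \glue \cdots \glue \tee_{n-1}$ on $\D(U_n)$ in the left-associated parenthesization.

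Now invoke the Gluing Theorem \refbf{gluing} for the recollement above, feeding it the $t$-structure $\tee_0 \glue \cdots \glue \tee_{n-1}$ on $\D(U_n)$ and $\tee_n$ on $\D(E_n)$. This produces a $t$-structure
\begin{equation}
(\tee_0 \glue \cdots \glue \tee_{n-1}) \glue \tee_n \notag
\end{equation}
on $\D(X)$, with the correct left-associated parenthesization. Uniqueness propagates at each stage: Theorem \refbf{gluing} yields a unique $t$-structure from a recollement and a pair of $t$-structures on the outer categories, and the inductive hypothesis supplies uniqueness of $\tee_0 \glue \cdots \glue \tee_{n-1}$ on $\D(U_n)$.

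The only genuine check beyond routine induction is that each closed/open pair $(U_{n-1} \hookrightarrow U_n, U_n \setminus U_{n-1})$ in the filtration actually gives rise to a \emph{bona fide} recollement at the level of derived $\infty$-categories, as tacitly used in Example \refbf{geomrecoll}. The technical axioms built into the notion of stratified space exist precisely to ensure that the six functors $j^*\dashv j_*\dashv j^!$ and $i_!\dashv i^*\dashv i_*$ exist and satisfy axioms (\oldstylenums{1})--(\oldstylenums{4}) of Definition \refbf{def:recol} at every level of $\textsf{s}$; the main obstacle in a more detailed write-up would therefore be bookkeeping rather than conceptual, namely verifying that these six-functor adjunctions behave compatibly as one passes through the filtration. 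Note that at this stage we are \emph{not} claiming independence of the result on the parenthesization; that genuinely nontrivial associativity statement is the subject of the subsequent subsection, and will require the Beck--Chevalley-type conditions encoded in the Urizen compass.
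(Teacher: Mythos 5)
Your proof is correct and follows essentially the same route as the paper: induction on the length of the stratification, applying the Gluing Theorem \refbf{gluing} to the recollement $\D(U)\lrlarrows\D(X)\lrlarrows\D(X\setminus U)$ coming from the largest proper closed stratum $U$ (the paper draws the triangular diagram $\dgrm{G}_n$ of inclusions, works out $n=2$ explicitly, and then remarks that the inductive step ``simply adds another inclusion''). The only wrinkle is a minor indexing mismatch with (\refbf{stratas}), where $E_i = U_i\setminus U_{i-1}$ forces the top open stratum to be $E_{n+1} = X\setminus U_n$ rather than $E_n$; this is a pre-existing off-by-one in the paper's own conventions and does not affect the substance of your argument.
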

\begin{proof}
A stratification of $X$ as in (\refbf{stratas}) induces a certain triangular diagram $\dgrm{G}_n$ of the following form, where all maps $i_k$ are inclusions of the closed subspaces $U_k$ of $\mathsf{s}$, and all $j_k$ are inclusions of the pure strata $E_k$: in the notation above we obtain 
\begin{equation}\label{pyram}
\xymatrix@!R=.25mm@!C=.25mm{
  	& 	& 	& 	& X \\
  	& 	& 	& U_n	& 	&  \\
  	& 	& \iddots	& 	& 	& 	&  \\
  	& U_1	& 	& \iddots	& 	& 	& 	&  \\
  E_0	& 	& E_1	& 	& \boxed{\dgrm{G}_n}	& 	& E_n	& 	& E_{n+1}.
  \ar@{<-} "1,5";"2,4" _{i_n}
  \ar@{<-} "2,4";"3,3" _{i_{n-1}}
  \ar "4,2";"3,3" ^{i_1}
  \ar "5,1";"4,2" ^{i_0}
  \ar "5,3";"4,2" _{j_0}
  \ar "5,7";"2,4" _{j_{n-1}}
  \ar "5,9";"1,5" _{j_n}
}
\end{equation} This diagram can clearly be defined inductively starting from $n=1$ (the diagram of inclusions as in Example \refbf{geomrecoll}).  Given this evident recursive nature, it is sufficient to examine the case $n=2$ of a stratification $U_0\subset U_1\subset X$, depicted as\footnote{Here and for the rest of the section, drawing large diagrams of stable categories, we adopt the following shorthand: every edge $h\colon \cate{E}\to \cate{F}$ is decorated with an adjoint triple $h_L\dashv h\dashv h_R\colon \cate{E}\lrlarrows \cate{F}$.}
\begin{equation}
  \xymatrix@!R=4mm@!C=4mm{
    	& 	& \D(X) \\
    	& \D (U_1)	& 	&  \\
    \D (E_0)	& 	& \D(E_1)	& 	& \D (E_2)
    \ar "1,3";"3,5" ^{q=j_1^*}
    \ar "2,2";"1,3" ^{a=i_{1,*}}
    \ar "2,2";"3,3" _{g=j_0^*}
    \ar "3,1";"2,2" ^{f=i_{0,*}}
  }
\end{equation} to notice that the $t$\hyp{}structure $(\tee_0\glue \tee_1)\glue \tee_2$ obtained by iterated gluing construction is 
\begin{align*}
[(\D(E_0)\glue \D(E_1))\glue \D(E_2)]_{\ge 0} &= \left\{
	  G\in\D(X)\left|\footnotesize 
       	\begin{array}{l}
	  qG \in \D(E_2)_{\ge 0}, \\ 
	  a_L G\in [\D(E_0)\glue \D(E_1)]_{\ge 0} 
	  \end{array}\right.
   \right\}\\
& = \left\{
       G\in\D(X) \left|\footnotesize 
       	\begin{array}{l}
			qG \in \D(E_2)_{\ge 0}, \\ 
			g(a_L G)\in \D(E_1)_{\ge 0}, \\ 
			f_L(a_L G) \in \D (E_0)_{\ge 0}
		\end{array}
		\right.
	\right\}\\
\protect{(\refbf{veryshort})} & = \big\{ G\in\D(X) \mid \{q, g a_L, f_L a_L\}(G)\in \D_{\ge 0} \big\}
\end{align*}
The inductive step simply adds another inclusion (and the obvious maps between derived categories) to these data.
\end{proof}
\begin{remark}
In the previous proof, in the case $n=2$, we could have noticed that two ``hidden'' recollement data, given by the inclusions $$(E_1\hookrightarrow X\setminus U_0, E_2\hookrightarrow X\setminus U_0) \text{ and } (E_0\hookrightarrow X, X\setminus U_0\hookrightarrow X)$$ come into play: the refinement of the inclusions in the diagram above induces an analogous refinement which passes to the derived $\infty$\hyp{}categories,
\begin{equation}\label{puregeom}
    \xymatrix@!R=4mm@!C=4mm{
      	& 	& \D(X) \\
      	& \D (U_1)	& 	& \D (X\setminus U_0) \\
      \D (E_0)	& 	& \D(E_1)	& 	& \D (E_2)
      \ar "1,3";"2,4" ^{u}
      \ar@{} "2,2";"2,4" |{\uno}
      \ar^a "2,2";"1,3" 
      \ar_g "2,2";"3,3" 
      \ar^k "2,4";"3,5" 
      \ar^f "3,1";"2,2" 
      \ar_h "3,3";"2,4" 
    }
\end{equation}of functors between derived $\infty$\hyp{}categories on the pure strata. These data induce two additional recollements, $(k,h)$ and $(u, a\circ f)$ which we can use to define a different parenthesization $\tee_0\glue (\tee_1\glue \tee_2)$.
\begin{remark}
When all the recollements data in (\refbf{puregeom}) are taken into account, we obtain a graph
\begin{equation}
\xymatrix{
	&   &  \D(X)\ar[dr]^u\ar[dl]_{a_L} &&\\
&  \D(U_1)\ar[dr]^g\ar[dl]_{f_L}  &   &  \D(X\setminus U_0)\ar[dr]^k\ar[dl]_{h_L} & \\
 \D(E_0)  &   &  \D(E_1) &   &  \D(E_2)
}
\end{equation}
\index{left\hyp{}winged| see {Geometric recollement}}
\index{Recollement!geometric ---}
called the \emph{left-winged} diagram associated with (\refbf{puregeom}), and defined by taking the left-most adjoint in the string $(-)_L \dashv (-) \dashv (-)_R$, when descending each left ``leaf'' of the tree represented in diagram (\refbf{puregeom}). In a completely similar fashion we can define the \emph{right-winged} diagram of (\refbf{puregeom}). We refer to these diagrams as (\textbf{l-}\refbf{puregeom}) and (\textbf{r-}\refbf{puregeom}) respectively.
\end{remark}
It is now quite natural to speculate about some sort of \emph{comparison} between the two recollements $(\tee_0\glue \tee_1)\glue \tee_2$ and $\tee_0\glue (\tee_1\glue \tee_2)$: in fact we can prove with little effort (once the phenomenon in study has been properly clarified) that the two $t$\hyp{}structures are equal, since the square 
\begin{equation}
	\xymatrix{
	  E_1	& X\setminus U_0 \\
	  U_1	& X
	  \ar "1,1";"1,2" 
	  \ar "1,1";"2,1" 
	  \ar "1,2";"2,2" 
	  \ar "2,1";"2,2" 
	}
\end{equation}is a {\color{black} fiber product} (in a suitable category of spaces) {\color{black} of a proper map with an open embedding}, and so there is a ``change of base'' morphism  $u\circ a \cong h\circ g$ which induces {\color{black}invertible 2-cells $g\circ a_L\cong h_L\circ u$ and $g\circ a_R\cong h_R\circ u$} filling the square \uno in diagram (\refbf{puregeom}): this is a particular instance of the so-called \emph{Beck-Chevalley condition} for a commutative square, which we now adapt to the $\infty$\hyp{}categorical setting.
\end{remark}
\index{Beck\hyp{}Chevalley condition}
\begin{definition}[Beck-Chevalley condition]\label{beckchev}
Consider the square
\begin{equation}\label{bcsquare}
	\xymatrix@!R=1cm@!C=1cm{
	  \cate{A}	& \cate {B} \\
	  \CC	& \D
	  \ar_g "1,1";"2,1" 
	  \ar|a "1,1";"1,2" 
	  \ar@<-6pt> "1,2";"1,1" _{a_L}
	  \ar@<6pt> "1,2";"1,1" ^{a_R}
	  \ar^u "1,2";"2,2" 
	  \ar|h "2,1";"2,2" 
	  \ar@<-6pt> "2,2";"2,1" _{h_L}
	  \ar@<6pt> "2,2";"2,1" ^{h_R}
	}
\end{equation}
in a $(\infty,2)$\hyp{}category, filled by an invertible 2-cell $\theta\colon u\circ a \cong h\circ g$ and such that $a_L\dashv a, h_L \dashv h$; then the square (\ref{bcsquare}) is said to satisfy the \emph{left Beck-Chevalley property}, or that it is a \emph{left Beck-Chevalley} square (\textsc{lbc} for short) if the canonical 2-cell 
\begin{equation}
\hat{\theta} \quad : \quad h_L \circ u \overset{h_L u * \eta}\Longrightarrow h_L \circ u\circ a\circ a_L \overset{h_L * \theta * a_L}\Longrightarrow h_L \circ h\circ g\circ  a_L \overset{\epsilon * g a_L}\Longrightarrow g\circ a_L
\end{equation}
is invertible as well. Similarly, when $a\dashv a_R, h\dashv h_R$ we define the 2-cell
\begin{equation}
\tilde{\theta} \quad : \quad 	g \circ a_R \overset{\eta * g a_R }\Longrightarrow h_R \circ h \circ g \circ a_R \overset{h_R * \theta * a_R}\Longrightarrow h_R \circ u\circ  a\circ  a_R \overset{h_R u * \epsilon}\Longrightarrow h_R \circ u
\end{equation}
and we say that the square (\ref{bcsquare}) is \emph{right Beck-Chevalley} (\textsc{rbc} for short) when $\tilde{\theta}$ is invertible. {\color{black}We will say that the square  (\ref{bcsquare}) is \emph{Beck-Chevalley} (\textsc{bc} for short) when it is both left and right beck-Chevalley.}
\end{definition}
In light of this property enjoyed by diagram \uno in (\refbf{puregeom}) it's rather easy to show that the two left classes
\begin{align*}
\big[ \big(\D(E_0)\glue \D(E_1)\big)\glue \D(E_2)\big]_{\ge 0} &= \big\{ G\in\D(X) \mid \{ku, g a_L, f_L a_L\}(G)\in \D_{\ge 0} \big\}\\
 \big[ \D(E_0)\glue \big(\D(E_1)\glue \D(E_2)\big)\big]_{\ge 0} &= \big\{ G\in\D(X) \mid \{ku, h_L u, f_L a_L\}(G)\in \D_{\ge 0} \big\}
\end{align*}
coincide up to a canonical isomorphism determined by the Beck-Chevalley 2-cell in \uno of diagram (\refbf{puregeom}). 

As a result, both $[ (\D(E_0)\glue \D(E_1))\glue \D(E_2)]_{\ge 0}$ and $[ \D(E_0)\glue (\D(E_1)\glue \D(E_2)) ]_{\ge 0} $ define the torsion class of the same $t$\hyp{}structure $(\D_{\ge 0}^{012}, \D_{< 0}^{012})$ on $\D(X)$. {\color{black}Since 2-cell in \uno of diagram (\refbf{puregeom}) is both left and right Beck-Chevalley, the analogous statement holds for the right classes, too. We can state this fact as follows.}

\label{roruberalles}
\begin{scholium}
{\color{black}An object $G\in \D(X)$ lies in $\D_{\ge 0}^{012}$ if and only if $l_0G\in \D(E_0)_{\ge 0}, l_1 G\in \D(E_1)_{\ge 0}, l_2 G\in \D(E_2)_{\ge 0}$ where $l_i$ is any choice of a functor $\D(X)\to \D(E_i)$ in the left-winged diagram of (\refbf{puregeom}). An object $G\in \D(X)$ lies in $\D_{< 0}^{012}$ if and only if $r_0G\in \D(E_0)_{< 0}, r_1 G\in \D(E_1)_{< 0}, r_2 G\in \D(E_2)_{< 0}$ where $r_i$ is any choice of a functor $\D(X)\to \D(E_i)$ in the right-winged diagram of (\refbf{puregeom}).}
\end{scholium}
It is now rather easy to repeat the same reasoning with arbitrarily long chains of strata: given a stratified space $(X, \textsf{s})$ we can induce the diagram
\begin{equation}\label{stratases}
\scalebox{.85}{ 	\xymatrix@!R=4mm@!C=4mm{
                	  	& 	& 	& 	& X \\
                	  	& 	& 	& U_n	& 	& X\setminus U_0 \\
                	  	& 	& U_{n-1}	& 	& U_n \setminus U_0	& 	& X\setminus U_1 \\
                	  	& \iddots	& 	& \iddots	& 	& \ddots	& 	& \ddots \\
                	  U_0	& 	& U_1 \setminus U_0	& 	& 	& 	& U_n \setminus U_{n-1}	& 	& X\setminus U_n
                	  \ar@{<-} "1,5";"2,6" 
                	  \ar@{<-} "1,5";"2,4" 
                	  \ar@{<-} "2,4";"3,5" 
                	  \ar@{<-} "2,4";"3,3" 
                	  \ar@{<-} "2,6";"3,7" 
                	  \ar@{<-} "2,6";"3,5" 
                	  \ar@{<-} "3,5";"4,6" 
                	  \ar@{<-} "3,5";"4,4" 
                	  \ar "4,2";"3,3" 
                	  \ar "4,8";"3,7" 
                	  \ar "5,1";"4,2" 
                	  \ar "5,3";"4,4" 
                	  \ar "5,7";"4,6" 
                	  \ar "5,9";"4,8" 
                	}}
\end{equation}where leaves correspond to pure strata of the stratification of $X$, and every square is a pullback of a proper map along an open embedding, 
so that the Beck-Chevalley condition is automatically satisfied {\color{black}by each square in the corresponding diagram $\D(\refbf{stratases})$ of $\infty$-categories of sheaves of the various nodes.}
{\color{black}The diagram $\D(\refbf{stratases})$ 
is equipped with 
recollement data between its adjacent nodes; we can again define the left-winged and right-winged version of $\D(\refbf{stratases})$, which we will refer as $\textbf{l-}\D(\refbf{stratases})$ and $\textbf{r-}\D(\refbf{stratases})$.}

Grouping all these considerations we obtain that
\begin{enumerate}
\item There exist ``compatible'' recollements to give associativity of all the parenthesizations
\begin{equation}
	(\tee_0\glue \cdots\glue \tee_n)_\mathfrak{P}=(\tee_0\glue \cdots\glue \tee_n)_\mathfrak{Q}
\end{equation}
for each $\mathfrak P, \mathfrak Q$ in the set of all possible parenthesizations of $n$ symbols. This is precisely the sense in which, as hinted above, geometric stratifications and recollement data ``interact nicely'' to give canonical isomorphisms between $(\tee_0\glue \cdots\glue \tee_n)_\mathfrak{P}$ and $(\tee_0\glue \cdots\glue \tee_n)_\mathfrak{Q}$, \ie a canonical choice for associativity constraints on the $\glue$ operation.
\item The following characterization for the class $\big( \D(E_0)\glue \cdots \glue \D(E_n)\big)_{\ge 0}$ holds:
\begin{equation}
	\big( \D(E_0)\glue \cdots \glue \D(E_n)\big)_{\ge 0} =
	\Big\{
	  G \mid l_i(G)\in \D(E_i)_{\ge 0}, \hspace{1mm} \forall i=0,\dots,n
	\Big\}
\end{equation}
where $l_i$ is any choice of a functor $\D(X)\to \D(E_i)$ in the left-winged diagram $\textbf{l-}\D(\refbf{stratases})$.

Similarly, the right class $\big( \D(E_0)\glue \cdots \glue \D(E_n)\big)_{< 0} $ can be characterized as the class of objects $G$ such that $r_i(G)\in \D(E_i)_{< 0}$, where $r_i$ is any choice of a functor $\D(X)\to \D(E_i)$ in the right-winged diagram $\textbf{r-}\D(\refbf{stratases})$.
\end{enumerate}
\subsection{Abstract associativity of the gluing.}\label{abstractassoc}
\index{.glue@$\glue$!Associativity of ---}
The geometric case studied above gives us enough information to make an ansatz for a general definition, telling us what we have to generalize, and in which way. 

In an abstract, stable setting we have the following definition, which also generalizes, in some sense, \refbf{def:recol}.

Let $n\ge 2$ be an integer, and let us denote as $\llbracket i,j\rrbracket$ the \emph{interval} between $i,j\in [n]$, \ie, set $\{k\mid i\le k\le j\}\subset [n]=\{0,1,\dots,n\}$ (we implicitly assume $i \le j$ and we denote $\llbracket i,i\rrbracket = \{i\}$ simply as $i$).
\index{Urizen compass}
\begin{definition}[Urizen compass\protect{\footnote{In the complicated cosmogony of W\@. Blake, \emph{Urizen} represents conventional reason and law; it is often represented bearing the same compass of the Great Architect of the Universe postulated by speculative Freemasonry; see for example the painting \emph{The Ancient of Days}, appearing on the frontispiece of the prophetic book ``Europe a Prophecy''.}}]\label{defUrizen}

A \emph{Urizen compass} of length $n$ is an arrangement of stable $\infty$\hyp{}categories, labeled by intervals $I\subseteq [n]$,  and functors in a diagram $\dgrm{G}_n$ of the form
\begin{equation}\label{ponzi}
\xymatrix@!R=3mm@!C=3mm{
  	& 	& 	& 	& \D^{\llbracket 0, n\rrbracket} \\
  	& 	& 	& \iddots	& 	& \ddots \\
  	& 	& \D^{\llbracket0,2\rrbracket}	& 	& \boxed{\dgrm{G}_n}	& 	& \D^{\llbracket n-2,n\rrbracket} \\
  	& \D^{\llbracket 0,1\rrbracket}	& 	& \D^{\llbracket 1,2\rrbracket}	& 	& \iddots	& 	& \D^{\llbracket n-1,n\rrbracket} \\
  \D^0	& 	& \D^1	& 	& \D^2	& 	& \dots	& 	& \D^n
  \ar "1,5";"2,6" 
  \ar "2,4";"1,5" 
  \ar "2,6";"3,7" 
  \ar "3,3";"2,4" 
  \ar "3,3";"4,4" 
  \ar "3,7";"4,8" 
  \ar "4,2";"5,3" 
  \ar "4,2";"3,3" 
  \ar "4,4";"5,5" 
  \ar "4,6";"3,7" 
  \ar "4,8";"5,9" 
  \ar "5,1";"4,2" 
  \ar "5,3";"4,4" 
  \ar "5,5";"4,6" 
  \ar "5,7";"4,8" 
}
\end{equation}
such that the following conditions hold:
\begin{itemize}
\item All the triples $\{\D^{I}, \D^{I\cupdot J}, \D^{J}\}$, where $I,J$ are contiguous intervals,\footnote{Two intervals $I,J\subseteq [n]$ are called \emph{contiguous} if they are disjoint and their union $I\cup J$ is again an interval denoted $I\cupdot J$.} form different recollements $\D^{I}\lrlarrows \D^{I\cupdot J}\lrlarrows \D^{J}$.
\item Every square 
\begin{equation}
	\xymatrix{
	  \D^{\llbracket i,j\rrbracket}	& \D^{\llbracket i,j+1\rrbracket} \\
	  \D^{\llbracket i+1,j\rrbracket}	& \D^{\llbracket i+1,j+1\rrbracket}
	  \ar "1,1";"1,2" 
	  \ar "1,1";"2,1" 
	  \ar "1,2";"2,2" 
	  \ar "2,1";"2,2" 
	}
\end{equation}
{\color{black}is \textsc{bc} in the sense of Definition \refbf{beckchev}.}
\end{itemize}
\end{definition}
Note that each row, starting from the base of the diagram, displays all possible intervals of length $k$. We can think of a Urizen compass as a special kind of directed graph (more precisely, a special kind of rooted oriented tree --a \emph{multitree} if we stipulate that each edge shortens a triple of adjunctions); the root of the tree is the category $\D^{\llbracket 0,\dots, n\rrbracket}$; the leaves are the categories $\{\D^0,\dots, \D^n\}$ (the ``generalized pure strata'').

\begin{theorem}[The northern emisphere theorem\protect{\footnote{In the languages spoken in the northern hemisphere of Tl\"on, ``la célula primordial no es el verbo, sino el adjetivo monosilábico. El sustantivo se forma por acumulación de adjetivos. No se dice luna: se dice \emph{aéreo-claro sobre oscuro-redondo} o \emph{anaranjado-tenue-del cielo} o cualquier otra agregación. \omissis{} Hay objetos compuestos de dos términos, uno de carácter visual y otro auditivo: el color del naciente y el remoto grito de un pájaro. Los hay de muchos: el sol y el agua contra el pecho del nadador, el vago rosa trémulo que se ve con los ojos cerrados, la sensación de quien se deja llevar por un río y también por el sueño. Esos objetos de segundo grado pueden combinarse con otros; el proceso, mediante ciertas abreviaturas, es prácticamente infinito. Hay poemas famosos compuestos de una sola enorme palabra.'' (\cite{Borges1963})}}]
 \label{northern}
A Urizen compass of length $n$ induce canonical isomorphisms between the various parenthesizations of $\tee_0\glue \cdots\glue \tee_n$, giving associativity of the glue operation between $t$\hyp{}structures.
 \end{theorem}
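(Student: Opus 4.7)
The plan is to proceed by induction on $n$, reducing the general associativity to a single ``associator move'' that transforms a parenthesization $\mathfrak{P}$ into $\mathfrak{Q}$ via local rearrangements of three consecutive categories. In the spirit of Mac Lane's coherence theorem, any two parenthesizations of $\tee_0 \glue \cdots \glue \tee_n$ are connected by a finite sequence of moves of the form $(\tee_a \glue \tee_b) \glue \tee_c \rightsquigarrow \tee_a \glue (\tee_b \glue \tee_c)$, so it suffices to produce, for any three contiguous $\D^I, \D^J, \D^K$ in the compass, a canonical identification of the $t$-structures obtained by the two possible binary parenthesizations over $\D^{I \cupdot J \cupdot K}$. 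Since this three-object configuration forms itself a sub-Urizen compass of length $2$, the problem reduces to the base case.

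For the base case $n = 2$, I would simply invoke the computation already carried out in the paragraph preceding the statement and collected in the Scholium on p.~\pageref{roruberalles}: the full (left and right) Beck-Chevalley condition on the relevant square produces invertible 2-cells $g \circ a_L \cong h_L \circ u$ and $g \circ a_R \cong h_R \circ u$, and combining this with Theorem \refbf{gluing} (together with Notation \refbf{veryshort}) shows that the two characterizations
\begin{equation}
\{G : \{ku, g a_L, f_L a_L\}(G) \in \D_{\ge 0}\} \qquad \text{and} \qquad \{G : \{ku, h_L u, f_L a_L\}(G) \in \D_{\ge 0}\}
\notag\end{equation}
coincide, and symmetrically for the right classes. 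Hence $(\tee_0 \glue \tee_1) \glue \tee_2$ and $\tee_0 \glue (\tee_1 \glue \tee_2)$ define the same $t$-structure on $\D^{\llbracket 0, 2 \rrbracket}$, with a canonical identification coming from the Beck-Chevalley 2-cell itself.

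For the inductive step, any parenthesization $\mathfrak{P}$ of $\tee_0 \glue \cdots \glue \tee_n$ decomposes the root $\D^{\llbracket 0, n \rrbracket}$ via the outermost gluing into contiguous pieces $\D^{\llbracket 0, k \rrbracket}$ and $\D^{\llbracket k+1, n \rrbracket}$ along the recollement of the Urizen compass, and each factor is itself produced by a parenthesization living in a sub-Urizen compass of strictly shorter length, to which the inductive hypothesis applies. Hence it only remains to compare parenthesizations that differ in the outermost split $k$; these differences can be performed one elementary pentagon-move at a time by applying the base case to the relevant triple of contiguous subintervals, each time using one of the elementary Beck-Chevalley squares of (\refbf{ponzi}) as the local associator 2-cell.

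The main obstacle will be the coherence: checking that the canonical 2-cells so obtained splice together into a single canonical isomorphism between arbitrary parenthesizations, independent of the sequence of elementary moves performed. This is analogous to Mac Lane's pentagon and can be checked by verifying that the elementary associator 2-cells satisfy the pentagon identity; the latter in turn follows from the fact that four adjacent Beck-Chevalley squares in (\refbf{ponzi}) fit together into a commuting cube of left (resp.\ right) adjoints, so that the mates defined in Definition \refbf{beckchev} are compatible when composed along any two routes through such a cube. Once coherence is established, we obtain the uniform characterization that an object $G \in \D^{\llbracket 0, n \rrbracket}$ lies in the left class of $(\tee_0 \glue \cdots \glue \tee_n)_\mathfrak{P}$ iff $l_i(G) \in \D^i_{\ge 0}$ for every leaf $i$, where $l_i$ is any choice of left-adjoint path from the root to $\D^i$ in the left-winged version of the compass (and symmetrically for the right class via the right-winged version), from which associativity of $\glue$ is immediate.
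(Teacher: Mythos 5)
Your overall strategy—induct on $n$, reduce every rebracketing to a single associator move on three contiguous categories, and use the Beck-Chevalley data from the Urizen compass to handle the base case—matches the spirit of the paper's argument (which, truth be told, is not spelled out in detail: the paper simply asserts the path-independent characterization of the left and right classes as a rephrasing of the theorem, and leaves the verification to the reader, resting on the discussion of the $n=2$ geometric case). Your base case is correct and essentially identical to the Scholium.

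Where I think you are overcomplicating things is the coherence discussion. You frame the statement as a Mac Lane coherence problem and worry about the pentagon identity for the associator 2-cells built from mates. But the objects being compared here are not themselves higher-categorical: a $t$-structure on $\D^{\llbracket 0,n\rrbracket}$ is (up to equivalence of data) just a pair of strictly full subcategories, and two $t$-structures are either \emph{equal} or not. Your base case does not produce an isomorphism between two different $t$-structures; it shows that the two candidate left classes are the \emph{same} subcategory, because membership in $\D^i_{\geq 0}$ is invariant under applying an equivalence in $\D^i$ and the \textsc{bc} $2$-cell is an equivalence. Since the left (resp.\ right) class associated to a parenthesization $\mathfrak{P}$ is thus literally equal to the one associated to the parenthesization $\mathfrak{Q}$ obtained by one associator move, composing moves trivially yields equality for all $\mathfrak{P}, \mathfrak{Q}$: there is no room for two ``different'' canonical isomorphisms to disagree, and the pentagon is vacuous. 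Your inductive step then needs only the observation (worth making explicit) that the Beck-Chevalley condition guarantees the well-definedness of the predicate ``$l_i(X)\in \D^i_{\geq 0}$'' independently of the choice of left-winged path $l_i$ from the root to the $i$-th leaf, which gives the uniform characterization displayed after the theorem. So the proposal is correct in substance, but the appeal to Mac Lane's theorem and a cube-of-mates argument can be safely replaced by the remark that the operation is on a set, not a category, and each associator move is an equality, not a mere isomorphism.
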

Rephrasing the above result in a more operative perspective, whenever we have a $n$-tuple $\{(\D^i,\tee_i)\}_{i=0,\dots, n}$ of stable $\infty$\hyp{}categories with $t$\hyp{}structure, such that $\{\D^0,\dots, \D^n\}$ are the leaves of a Urizen compass of length $n$, then the gluing operation between $t$\hyp{}structures gives a unique (up to canonical isomorphism) ``glued'' $t$\hyp{}structure on the root $\D^{\llbracket 0,n\rrbracket}$ of the scheme, resulting as
\begin{align}
	\big( \D^0\glue \cdots \glue \D^n\big)_{\ge 0} &=
	\Big\{
	  X\in \D^{\llbracket 0,n\rrbracket} \mid l_i(X)\in \D^i_{\ge 0}, \hspace{1mm} \forall i=0,\dots,n
	\Big\}\notag \\ 
	\big( \D^0\glue \cdots \glue \D^n\big)_{< 0} &=
	\Big\{
	  X\in \D^{\llbracket 0,n\rrbracket}  \mid r_i(X)\in \D^i_{< 0}, \hspace{1mm} \forall i=0,\dots,n
	\Big\}
\end{align}
where $l_i$ is any choice of a path from the root $\D^{\llbracket 0,n\rrbracket}$ to the $i^\text{th}$ leaf in the left-winged diagram of $\mathsf{G}_n$, and $r_i$ is any choice of a path from the root $\D^{\llbracket 0,n\rrbracket}$ to the $i^\text{th}$ leaf in the right-winged diagram of $\mathsf{G}_n$.
\subsection{Gluing $J$-families.}
Our theory of slicings \cite{heart} shows that the set $ \ts (\D)$ of $t$\hyp{}structures on a stable $\infty$\hyp{}category $\D$ carries a natural action of the ordered group of integers. This entails that the most natural notion of a ``family'' of $t$\hyp{}structures is a \emph{equivariant} $J$-family of $t$\hyp{}structures, namely an equivariant map $J\to  \ts (\D)$ from another $\mathbb{Z}$-poset $J$.

The formalism of equivariant families allows to unify several constructions in the classical theory of $t$\hyp{}structures: in particular
\begin{quote}
The \emph{semiorthogonal decompositions} of \cite{BO, Kuz} are described as precisely those $J$-families $\tee\colon J\to  \ts (\D)$ taking values on fixed points of the $\mathbb{Z}$-action; these are equivalently characterized as
\begin{itemize}
\item the \emph{stable} $t$\hyp{}structures, where the torsion and torsionfree classes are themselves stable $\infty$\hyp{}categories;
\item the equivariant $J$-families where $J$ has the trivial action.
\end{itemize}
\end{quote}
And again
\begin{quote}
The datum of a single $t$\hyp{}structure $\tee\colon \{*\}\to  \ts (\D)$ is equivalent to the datum of a whole \emph{$\mathbb{Z}$-orbit} of $t$\hyp{}structures, namely an equivariant map $\mathbb{Z}\to  \ts (\D)$.
\end{quote}
In light of these remarks, given a recollement $(i,q)\colon \D^0 \lrlarrows  \D \lrlarrows  \D^1$ it is natural to define the gluing of two $J$-families 
\begin{equation}
	\xymatrix{
	   \ts (\D^0)	& J	&  \ts (\D^1)
	  \ar_(.3){\tee_0} "1,2";"1,1" 
	  \ar^(.3){\tee_1} "1,2";"1,3"
	}
\end{equation}
to be the $J$-family $\tee_0\glue\tee_1\colon J\to  \ts (\D)\colon j\mapsto \tee_0(j)\glue\tee_1(j)$.

It is now quite natural to ask how does the gluing operation interact with the two situations above: is the gluing of two $J$-families again a $J$-family? As we are going to show, the answer to this question is: yes. Indeed, it's easy to see that the gluing operation is an equivariant map, by recalling that $(\EE_0\glue \EE_1)[1] = \{f\in\hom(\D)\mid f[-1]\in \EE_0\glue \EE_1\}$, and that all of the functors $q,i_L,i_R$ preserves the pullouts (and so commute with the shift). We have
\begin{align*}
(\EE_0\glue \EE_1)[1] &= \{ f\in\hom(\D)\mid \{q, i_L\}(f[-1])\in \EE\}\\
&= \{ f\in\hom(\D)\mid q(f[-1])\in \EE_1,\hspace{1mm} i_L(f[-1])\in\EE_0\}\\
&= \{ f\in\hom(\D)\mid q(f)[-1]\in \EE_1,\hspace{1mm} i_L(f)[-1]\in\EE_0\}\\
&= \{ f\in\hom(\D)\mid q(f) \in \EE_1[1],\hspace{1mm} i_L(f)\in \EE_0[1]\}\\
&= \EE_0 [1]\glue \EE_1[1].
\end{align*}

Given this, it is obvious that given two semiorthogonal decompositions $\tee_i\colon J\to  \ts (\D_i)$ on $\D^0, \D^1$, the $J$-family $\tee_0\glue\tee_1$ is again a semiorthogonal decomposition on $\D$ (the trivial action on $J$ remains the same; it is also possible to prove directly that if $\EE_0,\EE_1$ are left parts of two exact normal torsion theories $\fF_0,\fF_1$ on $\D^0, \D^1$, then the gluing $\EE_0\glue \EE_1$ is the left part of the exact normal torsion theory $\fF_0\glue\fF_1$ on $\D$).
In some sense at the other side is the gluing of two $\mathbb{Z}$-orbits $\tee_0,\tee_1\colon \mathbb{Z}\to  \ts (\CC)$ on $\D^0$ and $\D^1$. Namely, the glued $t$\hyp{}structure $\tee_0\glue\tee_1$ on $\D$ is the $\mathbb{Z}$-orbit $(\tee_0 \glue \tee_1)[k] = \tee_0[k]\glue \tee_1[k]$. 

\medskip 
The important point here is that this construction can be framed in the more general context of \emph{perversity data} associated to a recollement, which we now discuss in the attempt to generalize at least part of the classical theory of ``perverse sheaves'' to the abstract, $\infty$\hyp{}categorical and torsio-centric setting.
\begin{definition}[Perversity datum]\label{def.perversity datum}
Let $p\colon \{0,1\}\to\mathbb Z$ be any function, called a \emph{perversity datum}; suppose that a recollement
$$(i,q)\colon \D^0 \lrlarrows  \D  \lrlarrows \D^1$$
is given, and that $\tee_0,\tee_1$ are $t$\hyp{}structures on $\D^0, \D^1$ respectively. We define the ($p$-)\emph{perverted $t$\hyp{}structures} on $\D^0,\D^1$ as
\begin{gather*}
\prescript{p}{}{\tee}_0 = \tee_0[p(0)] = (\D^0_{\ge p(0)}, \D^0_{<p(0)})\\
\prescript{p}{}{\tee}_1 = \tee_1[p(1)] = (\D^1_{\ge p(1)}, \D^1_{<p(1)})
\end{gather*}
\end{definition}
\begin{definition}[Perverse objects]
Let $p$ be a perversity datum, in the notation above; the \emph{($p$-)glued $t$\hyp{}structure} is the $t$\hyp{}structure $\prescript{p}{}{(\tee_0\glue\tee_1)} = \prescript{p}{}{\tee}_0\glue \prescript{p}{}{\tee}_1$. The heart of the $p$-perverted $t$\hyp{}structure on $\D$ is called the ($\infty$-)category of \emph{($p$-)perverse objects} of $\D$.
\end{definition}
Notice that saying  ``the category of $p$-perverse objects of $\D$'' is an abuse of notation: this category indeed does not depend only on $\D$ and $p$, but on all of the recollement data and on the $t$\hyp{}structures $\tee_0$ and $\tee_1$. Also notice how for a constant perversity datum $p(0)=p(1)=k$, the $p$-perverted $t$\hyp{}structure is nothing but the $t$\hyp{}structure $\tee_0\glue\tee_1$ shifted by $k$.

\medskip 

We can extend the former discussion to the gluing of a whole $n$-tuple of $t$\hyp{}structures, using a Urizen compass:
\begin{remark}\label{rem.urizen}
In the case of a Urizen compass of dimension $n$ (diagram \refbf{ponzi}), whose leaves are the categories $\{\D^0 ,\dots, \D^n\}$, each endowed with a $t$\hyp{}structure $\tee_i$; a perversity function $p\colon \{0,\dots,n\}\to \mathbb{Z}$ defines a perverted $t$\hyp{}structure
\begin{equation}
\prescript{p}{}{(\tee_0\glue \cdots \glue \tee_n)} = \tee_0[p(0)]\glue \tee_1[p(1)]\glue \cdots \glue \tee_n[p(n)]
\end{equation}
which is well-defined in any parenthesization thanks to the structure defining the Urizen compass.
This result  immediately generalizes to the case of a Urizen compass of $J$-families of $t$\hyp{}structures, $\tee_i\colon J\to  \ts (\D_i)$, with $i=0,\dots, n$. Indeed perversity data act on $J$-equivariant families of $t$\hyp{}structures by
\begin{equation}
\prescript{p}{}{\tee}_i(j) = \tee_i(j)[p(i)] = (\D^i_{\ge j+p(i)}, \D^i_{<j+p(i)}).
\end{equation}
This way, a $J$-perversity datum $p\colon \{0,\dots,n\}\to \mathbb{Z}$ induces a $p$-perverted $t$\hyp{}structure
\begin{equation}
\prescript{p}{}{(\tee_0\glue \cdots \glue \tee_n)}=\prescript{p}{}{\tee}_0\glue \cdots \glue \prescript{p}{}{\tee}_n\,\colon\, J\to \ts(\D^{\llbracket 0,n\rrbracket})
\end{equation}
on $\D^{\llbracket 0,n\rrbracket}$.
\end{remark}

\begin{remark}[Gluing of slicings.]\index{Slicing!gluing of ---s}\index{Slicing}
Recall that a \emph{slicing} on a stable $\infty$\hyp{}category $\D$ consists on a $\mathbb{R}$-family of $t$\hyp{}structures $\tee\colon \mathbb{R}\to  \ts (\D)$, where $\mathbb R$ is endowed with the usual total order. This means that we are given $t$\hyp{}structures $\tee_\lambda = (\D_{\ge \lambda}, \D_{ < \lambda})$, one for each $\lambda\in\mathbb R$, such that $\tee_{\lambda +1} = \tee_\lambda[1]$. Slicings on $\D$ are part of the abstract definition of a $t$-\emph{stability} on a triangulated (or stable) category $\D$, see \cite{Brid,GKR}.

Grouping together all the above remarks, we obtain that the gluing of two slicings $\tee_i\colon \mathbb{R}\to  \ts (\D^i)$ gives a slicing on $\D$ every time $\D^0\lrlarrows \D\lrlarrows \D^1$ is a recollement on $\D$. Moreover, if $p\colon \{0,1\}\to \mathbb{Z}$ is a perversity datum, we have a corresponding notion of \emph{$p$-perverted slicing} on $\D$. More generally one has a notion of $p$-perverted slicing on $\D^{\llbracket 0,n\rrbracket}$ induced by a pervesity datum $p$ and by and by a Urizen compass of slicings $\dgrm{G}_n$.
\end{remark}
\paragraph{Acknowledgements}
Version 1 of the present paper is sensibly different from the present one; the unexpected (and actually undue) symmetric behavior of stable recollements (Lemma \textbf{4.3} of version 1, therein called the \emph{Rorschach lemma}\footnote{Walter Joseph Kovacs, also known as Rorschach (New York 1940 – Antarctica 1985).})  turned out to be the far reaching consequence of a typo in one of the commutative diagrams on page 9. This has now been corrected (\ie, Lemma \textbf{4.3}, together with all its corollaries, has been removed). 

Luckily, this was only minimally affecting the remaining part of the article, which has now been revised accordingly. In particular the section on the associative properties of recollements has been expanded, some additional examples have been added, and several other minor typos have been corrected.

\newcommand{\arXivPreprint}[1]{arXiv preprint \href{http://arxiv.org/abs/#1}{arXiv:#1}}

\newcommand{\etalchar}[1]{$^{#1}$}
\providecommand{\bysame}{\leavevmode\hbox to3em{\hrulefill}\thinspace}
\providecommand{\MR}{\relax\ifhmode\unskip\space\fi MR }
\providecommand{\MRhref}[2]{%
  \href{http://www.ams.org/mathscinet-getitem?mr=#1}{#2}
}
\providecommand{\href}[2]{#2}

\end{document}